\documentclass[final]{amsart}
\usepackage{caption}
\usepackage{subcaption}

\usepackage{mathabx,cite}
\usepackage[utf8]{inputenc} 
\usepackage[english]{babel}
\usepackage{amstext}
\usepackage{euscript}
\usepackage{bbm}
\usepackage{mathrsfs}
\usepackage{enumerate}
\usepackage{graphicx}
\usepackage{amscd}
\usepackage{verbatim}
\usepackage{amssymb}
\usepackage{amsthm}
\usepackage{amsmath}
\usepackage{amstext}
\usepackage{amsfonts}
\usepackage{latexsym}
\usepackage{mathtools} 
\usepackage{enumitem} 
\usepackage{accents} 
\usepackage[foot]{amsaddr} 


\usepackage[usenames,dvipsnames]{xcolor} 
\definecolor{dkblue}{RGB}{1,31,91} 
\usepackage[colorlinks=true, pdfstartview=FitV, linkcolor=dkblue, citecolor=dkblue, urlcolor=dkblue]{hyperref}

\usepackage{todonotes}

\usepackage[color]{showkeys}

\newcommand{\eqdef }{\overset{\mbox{\tiny{def}}}{=}}

\newcommand{\rth}{{\mathbb{R}^3}}

\theoremstyle{definition}
\newtheorem{theorem}{Theorem}

\newtheorem{corollary}[theorem]{Corollary}

\newtheorem{lemma}[theorem]{Lemma}
\newtheorem{proposition}[theorem]{Proposition}

\newtheorem{hypothesis}[theorem]{Hypothesis}
\newtheorem{remark}[theorem]{Remark}

\numberwithin{equation}{section}
\numberwithin{theorem}{section}
\numberwithin{definition}{section}

\setcounter{tocdepth}{2}

\begin{document}

\keywords{Radiative transfer equation, stationary solutions, local thermodynamic equilibrium, non-local elliptic equations}
\subjclass[2010]{Primary  35Q31,	85A25,  76N10, 35R25, 35A02  }

\title[Temperature distribution of a body heated by radiation]{On the temperature distribution of a body heated by radiation}

\author[J. W. Jang]{Jin Woo Jang}
\address{Department of Mathematics, Pohang University of Science and Technology (POSTECH), Pohang, South Korea (37673). \href{mailto:jangjw@postech.ac.kr}{jangjw@postech.ac.kr} }

\author[J. J. L. Vel\'azquez]{Juan J. L. Vel\'azquez}
\address{Institute for Applied Mathematics, University of Bonn, 53115 Bonn, Germany. \href{mailto:velazquez@iam.uni-bonn.de}{velazquez@iam.uni-bonn.de} }

\begin{abstract}In this paper, we study the temperature distribution of a body when the heat is transmitted only by radiation. The heat transmitted by convection and conduction is ignored. We consider the stationary radiative transfer equation in the local thermodynamic equilibrium. We prove that the stationary radiative transfer equation coupled with the non-local temperature equation is well-posed in a generic case when emission-absorption or scattering of interacting radiation is considered. The emission-absorption and the scattering coefficients are assumed to be 
general and they can depend on the frequency of radiation. 
 We also establish an entropy production formula of the system, which is used to prove the uniqueness of solutions for an incoming radiation with constant temperature.
\end{abstract}
\thispagestyle{empty}

\maketitle
\tableofcontents

\section{Introduction}
In the paper, we will study the mathematical problem that describes the temperature of a body interacting with radiation.   This problem has been extensively studied  both in the physical and mathematical literature.  We consider a model that combines the photon radiation transfer equation with a non-local temperature equation of the body of materials (which could be solids, liquids, gases, or plasmas) interacting with radiation. More precisely, we focus on the problem of determining the local temperature of a body where heat transfer is due only to the radiation. In particular, the effects of heat conduction will be completely ignored in this paper. We are interested in the stationary distributions of the temperature that are due to the incoming radiation arriving to a body.
It turns out that the temperature distribution is determined by a non-local equation, which encodes the transfer of radiation at each different point where the absorption and the emission are taking place as well as the scattering phenomena: i.e., deflection of photons without change of their frequencies. In spite of the fact that this is a canonical problem in the theory of heat transfer processes, the well-posedness of the stationary problem has not been studied in detail in the mathematical literature to the best of our knowledge, although some of the existing literature that we will discuss later have considered some stationary related problems including also heat conduction.
In this paper, we will not restrict ourselves to so-called Grey approximations (i.e., the independence of the emission-absorption coefficients on the frequency), since we will consider the case where the scattering coefficient can also depend on the frequency. 

The problem that we consider contains only absorption,  emission, and scattering. One interesting aspect of the problem is that the presence of absorption, emission, and scattering is enough to determine the temperature of the material even in the absence of the Laplacian modeling heat conduction. The solutions that we obtain in the paper are stationary, non-equilibrium, constant flux solutions, which are characterized by an incoming flux of radiation interacts with the material and eventually the radiation escapes and yields an outgoing flux of radiation.

\subsection{Summary of relevant literature}

 The distribution of temperature of a body in which relevant part of the heat transfer takes place by means of radiation has been studied in several papers. The earliest studies of the interaction of radiation with gases are due to Compton and Milne  \cite{Compton, Milne}. Additional results were obtained by Holstein and Kenty \cite{PhysRev.42.823,Holstein}. In particular, in \cite{Holstein} it was noticed that in order to understand the transfer of heat by means of radiation the analysis of a non-local problem is required. A specific problem studied in 
\cite{spiegel1957smoothing} is
the evolution in time of the temperature of a bar in which the heat transport due only to the radiation and the temperature is close to a constant was computed using a linearized form of the equation as well as the Fourier transform method. See also \cite{oxenius,rutten1995radiative,mihalas2013foundations} for further general references.  


In \cite{Golse-Bardos, B3,golse2008radiative,golse1986generalized,golse1987milne,MR2600939,CRMECA_2022__350_S1_A12_0,B1,MR2076780}, the evolution in time of temperature of materials in which the radiation plays an important role has been considered. In most of these papers, the well-posedness of a time-dependent problem using semi-group theory, for instance the theory of $m$-accretive operators, has been obtained.   Notice that the existence of solutions to the time-dependent problem does not imply the existence of steady-states, as solutions may increase over time. Even with the presence of uniform bounds in time, the solutions can be oscillatory for long times.

Several of the mathematical papers  studying the heat transfer by means of radiation contain also terms describing heat conduction, both in time-dependent and stationary situations (cf. \cite{MR2478911,MR2600939,MR3412332, MR2076780, MR3797032, MR1866555, MR1608072,MR1471600}). In these cases, it is possible to use classical tools from elliptic and parabolic theory in order to gain regularity of solutions.
In \cite{MR1608072,Nouri2,MR2478911,MR1471600}, stationary distributions of temperature in the place of radiative transfer have been considered under several boundary conditions. The papers that are the closest to the problem that we consider in this paper are \cite{MR2478911,MR1608072, MR1471600}. In these papers, the equations contain a Laplacian $-k\Delta T$, that models the heat conductivity, and that plays a crucial role deriving a priori estimates. Moreover, in those papers the Grey approximation is assumed. In addition in \cite{MR1608072} a problem where the absorption of radiation and scattering inside the body play a crucial role is considered. In order to prove wellposedness for the stationary solution problem, the existence of suitable sub- and super-solutions is assumed without proof. On the other hand, in \cite{MR2478911,MR1608072, MR1471600}, the boundary conditions are different from the one that we assume in this paper. Specifically,  the papers \cite{MR2478911,MR1608072, MR1471600} assume an influx of heat in the energy equation while we do not impose any such conditions on the temperature at the boundary. On the contrary, in this paper, there is an incoming radiation coming from far away propagating through empty space (completely transparent material) and arriving to the body where it is absorbed and scattered.

In a different direction, in \cite{2208.04212,2109.10071,RSM} 
 solutions for the homogeneous Boltzmann equation combined with the radiative transfer equation have been studied. A recent result \cite{MR4519710} is about the distribution of the temperature in materials in which both heat conductivity and radiation play a role including numerical methods to approximate solutions of the system.

\subsection{Stationary radiative transfer equation in materials}
Throughout the rest of the paper, we will restrict ourselves to a physical situation where the radiation is interacting with materials. A general form of the stationary radiative transfer equation in materials including the excitation and the de-excitation of gas molecules as well as the scattering of photons can be written as
\begin{equation}\label{General rad eq}  n\cdot \nabla_x I_\nu = \alpha^a_\nu(T) B_\nu(T)- \alpha^a_\nu(T) I_\nu+  \alpha^s_\nu(T)G_\nu- \alpha^s_\nu(T) I_\nu,  
\end{equation}
under the assumptions that the gas molecules are in the local thermodynamic equilibrium (LTE) where  the terms $\alpha^a_\nu I_\nu $, $ \alpha^s_\nu I_\nu,\alpha^a_\nu B_\nu, \alpha^s_\nu G_\nu$ stand for absorption, loss term in the scattering, emission due to deexcitation of gas, and gain term in the scattering, respectively. Here $$B_\nu=B_\nu(T)\eqdef \frac{2h\nu^3}{c^2}\frac{1}{e^{\frac{h\nu}{kT}}-1}$$ is the Planck emission of a black-body and $I_\nu=I_\nu(x,n)$ stands for the intensity of the radiation in the frequency $\nu$ at position $x\in\Omega \in \rth$ with the direction $n\in \mathbb{S}^2$. Throughout the paper, we assume that $\partial \Omega$ is a $C^1$ boundary. The ``non-local" gain term of scattering with kernels is given by
\begin{equation}\label{scattering}\alpha^s_\nu(T)G_\nu = \alpha^s_\nu(T)\int_{\mathbb{S}^2} K(n,n')I_\nu(x,n')dn',\end{equation} with \begin{equation}\label{scattering assume}\int_{\mathbb{S}^2} K(n,n')dn=1.\end{equation} If the scattering is isotropic then it becomes simply $\alpha^s_\nu(T)I_\nu$ in \eqref{General rad eq}. 
Then we write the flux of radiation energy with frequency $\nu$ as $$\mathcal{F}_\nu=\mathcal{F}_\nu(x)=\int_{\mathbb{S}^2}nI_\nu(x,n)dn \in \rth.$$

A matter is regarded as being in a state of Local Thermodynamic Equilibrium (LTE) \cite[pp. 150]{oxenius} when microscopic mechanism, along with absorption and emission processes, occur with considerable rapidity at every point in the system.  The microscopic mechanism that tends to bring the distributions of states to equilibrium is fast enough to guarantee that the LTE assumption holds at any point of the material. If these conditions are not met, the matter is instead described as existing in a non-LTE state; see \cite{2109.10071, mihalas2013foundations,oxenius}. 
The class of models that we consider in this paper corresponds to the LTE situation. The local temperature distribution $T$ is well-defined at each point $x\in\Omega$, and the coefficients $\alpha_\nu^a$ and $\alpha^s_\nu$ can depend on the frequency of radiation $\nu$ and the local temperature distribution $T$ in general.

Throughout the rest of this paper, we will consider a generalized case where we take into account not just the emission-absorption process of radiation but also the scattering of photons in the stationary situation. To this end, we use a generic form of the stationary radiative transfer equation in materials \eqref{General rad eq}. 
Here, our radiation 
can have the continuous spectrum $\nu\in (0,\infty).$  Then 
we impose the following non-local temperature equation for the flux of radiation energy:
\begin{equation}\label{heat equation}\nabla_x\cdot \mathcal{F}(x)=0\text{ at any }x\in\Omega.\end{equation} Namely, in the LTE situation, we assume that the flux of radiation energy is divergence-free and the radiation energy is being conserved in each closed domain. This ansatz is shown to be true via \eqref{heat eq scattering only} in the case of pure scattering where the materials do not emit or absorb radiation, for example.  This will further be discussed in Section \ref{sec.1.5}.

\begin{figure}
    \centering
    \includegraphics[scale=0.09]{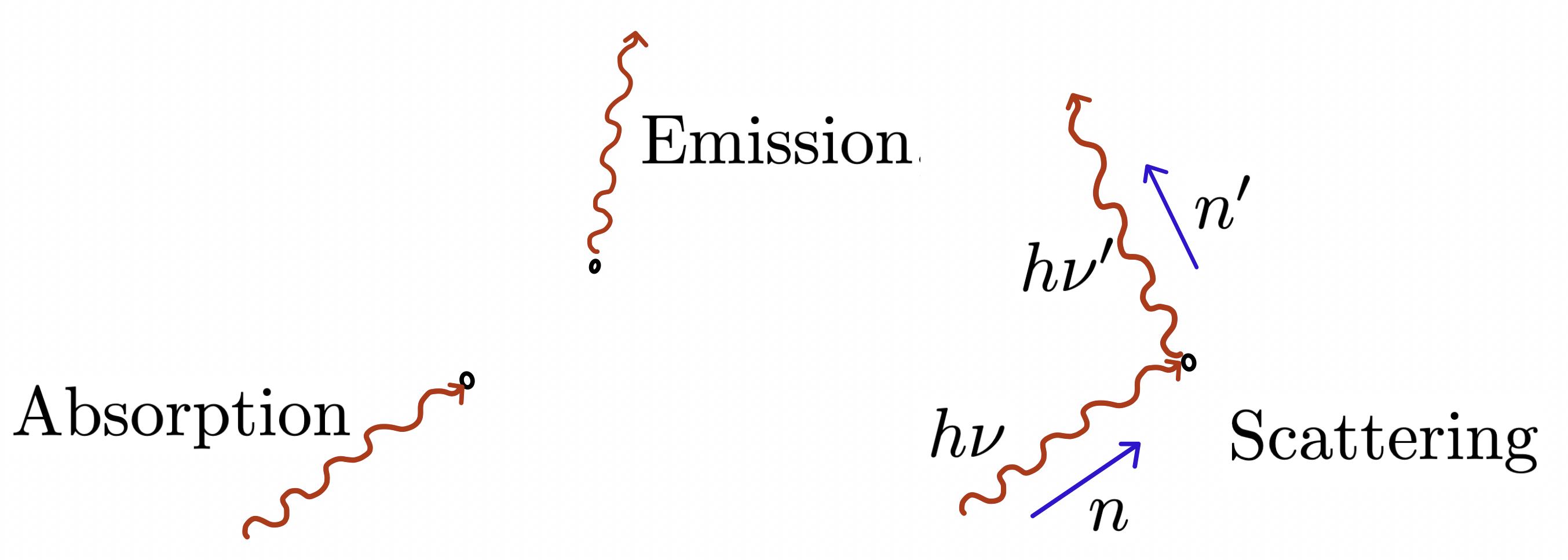}
    \caption{Absorption, Emission, and Scattering}
    \label{fig:my_label}
\end{figure}

\subsection{A mathematical problem and main assumptions} \label{sec.1.5}Conditions for the validity of \eqref{General rad eq} that we need are as in the following hypothesis. 
\begin{hypothesis}\label{modelling assumptions}Throughout the paper, we assume the following:
\begin{enumerate}
    \item The distributions of the material body are at LTE at each point of the material. More precisely, in the case of systems with possible several excited states at each point, they satisfy the Boltzmann ratio $n_u=e^{-\frac{\Delta E}{kT}}n_l$ with degeneracy equal to 1, where $n_u,n_l,\Delta E,k,$ and $T$ are number densities of upper and lower energy-states, the energy difference, the Boltzmann constant, and the local temperature, respectively.
    \item We consider the case that there is no displacement of the point of the body. In the case of the fluid, we consider the case where the fluid is at rest; i.e., the macroscopic velocity $u$ satisfies $u=0$. 
    \item The body of materials is in a convex bounded domain with $C^1$ boundary and there is a given profile of incoming radiation posed at the boundary.
\end{enumerate} 
\end{hypothesis}
\begin{remark} Here we make several comments related to the convexity assumption of the boundary of the domain $\Omega$.
\begin{enumerate}   
    \item We can obtain an equivalent problem by replacing the domain $\Omega$ by a larger ball $B_R(0)$ of radius $R>0$ containing $\Omega$ if this domain is convex. Doing that, we assume that $\alpha_\nu^a=\alpha_\nu^s=0$ in $B_R(0)\setminus \Omega.$ If the incoming boundary radiation at $\partial \Omega $, $n\cdot n_x<0$ is given by $g_\nu(x,n)$ (where $n_x$ is the outward normal vector at the boundary point $x\in\partial\Omega$),  we can extend then $I_\nu(x,n)$ along the lines $y=x-tn$, $t\ge 0$ (backwards) until reaching the boundary $\partial B_R(0)$. The convexity of $\Omega$ implies that this extension is well-defined. This gives the new boundary radiation $\tilde{g}_\nu(x,n)$ at $x\in \partial B_R(0),$ $n\cdot n_x<0$. We can then assume the boundary condition at $\partial B_R(0)$, with $\Omega \subset B_R(0)$. Notice that the coefficients $\alpha^a_\nu$ and $\alpha^s_\nu$ in this case is not constant and should depend also on the position $x$, but we think that the methods that we use in this paper will also allow to cover this case.
    \item If $\Omega$ is not convex, we cannot reduce the problem with boundary conditions $I_\nu(x,n)=g_\nu(x,n)$, $x\in\partial\Omega$, $n\cdot n_x<0$ to a problem with boundary values at the boundary of a larger ball $B_R(0).$ Indeed, in general the function $I_\nu(x,n)$ is not uniquely defined at $\partial B_R(0).$ (See Figure \ref{fig2} and \ref{fig3}.)
        \begin{figure}[h]
     \centering
     \captionsetup[subfigure]{labelformat=empty}
     \begin{subfigure}{0.3\textwidth}
         \centering
         \includegraphics[width=\textwidth]{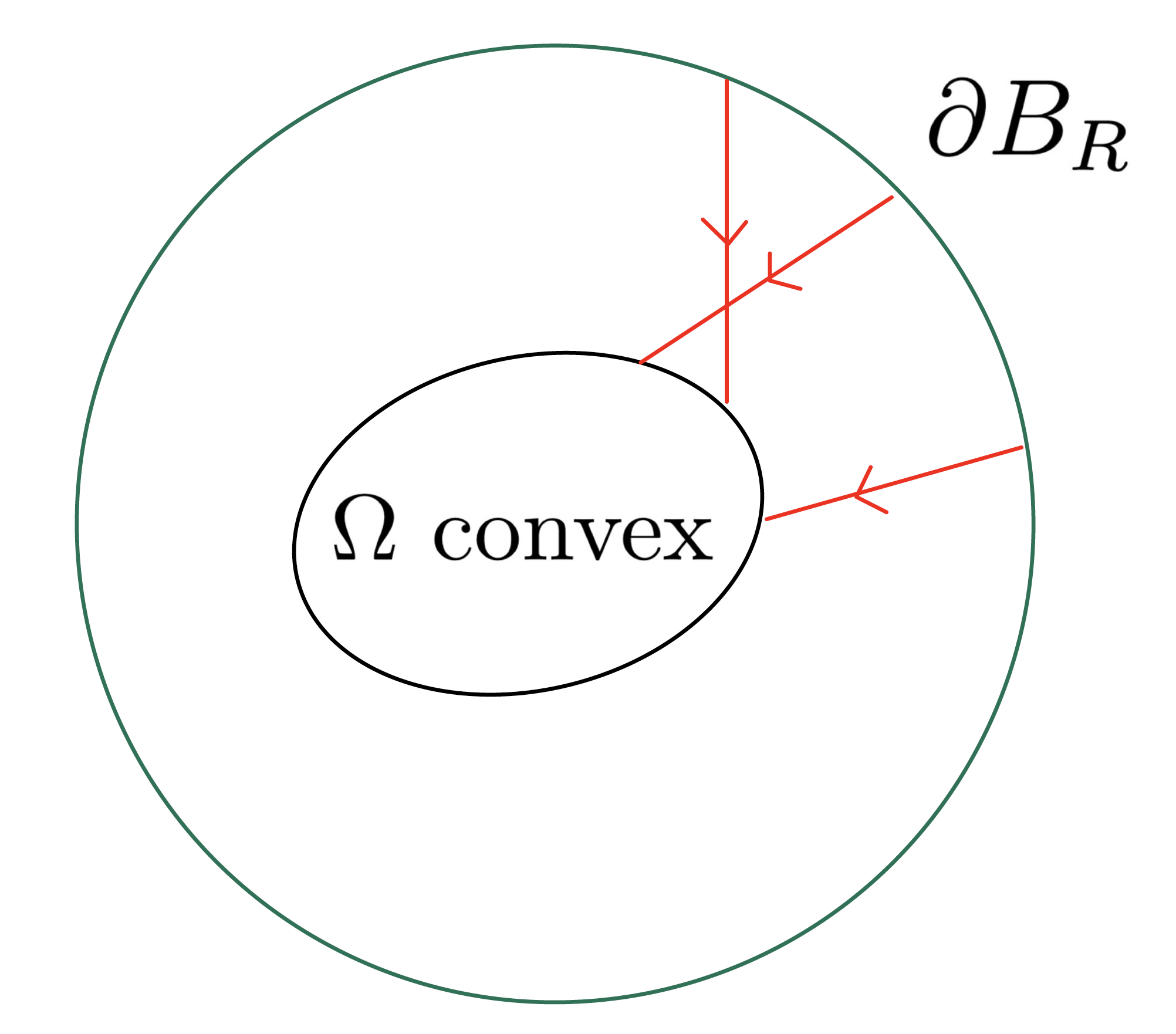}
         \caption{(a)}
         \label{fig2}
     \end{subfigure}\ \ \ \ \ \
     \begin{subfigure}{0.3\textwidth}
         \centering
         \includegraphics[width=\textwidth]{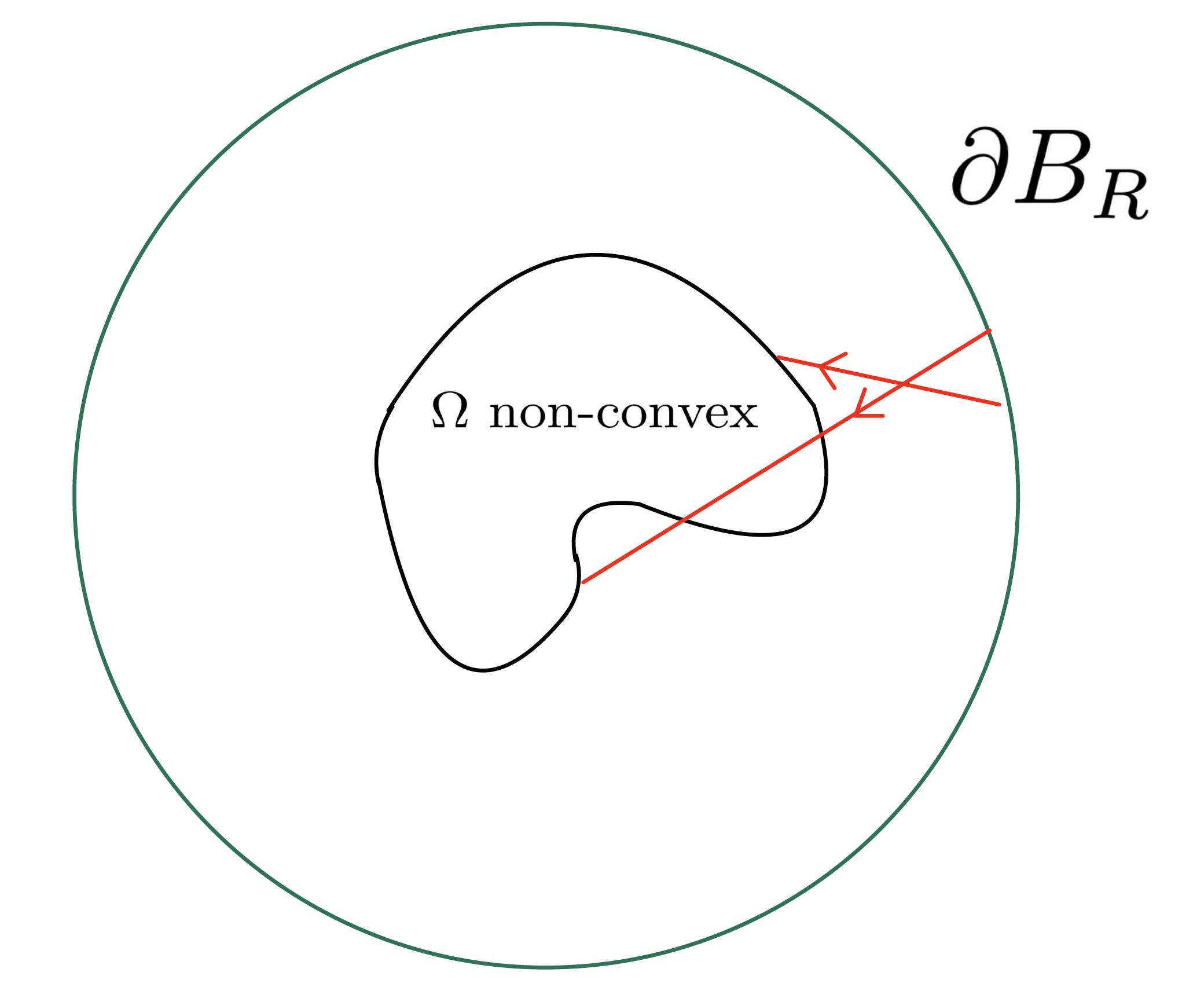}
         \caption{(b)}
         \label{fig3}
     \end{subfigure}
     \caption{(a) $I_\nu(x,n)$ defined uniquely in $\partial B_R(0)$.\newline  (b) $I_\nu(x,n)$ not uniquely defined in $\partial B_R(0)$.}
    \end{figure}
    
    To prove well-posedness for the boundary value problem with boundary values $I_\nu(x,n)=g_\nu(x,n)$, $\forall x\in\partial\Omega$, $n\cdot n_x<0$, $n\in \mathbb{S}^2$ in non-convex domains seems to require more sophisticated arguments than those used in this paper. The difficulty is that to determine $I_\nu(x,n)$ at some points of $\partial \Omega$, with $\Omega$ non-convex requires information about the boundary values of $I_\nu(x,n)$ at other points of $\partial \Omega$. (Figure \ref{fig4}) 
    \begin{figure}[h]
        \centering
        \includegraphics[scale=0.15]{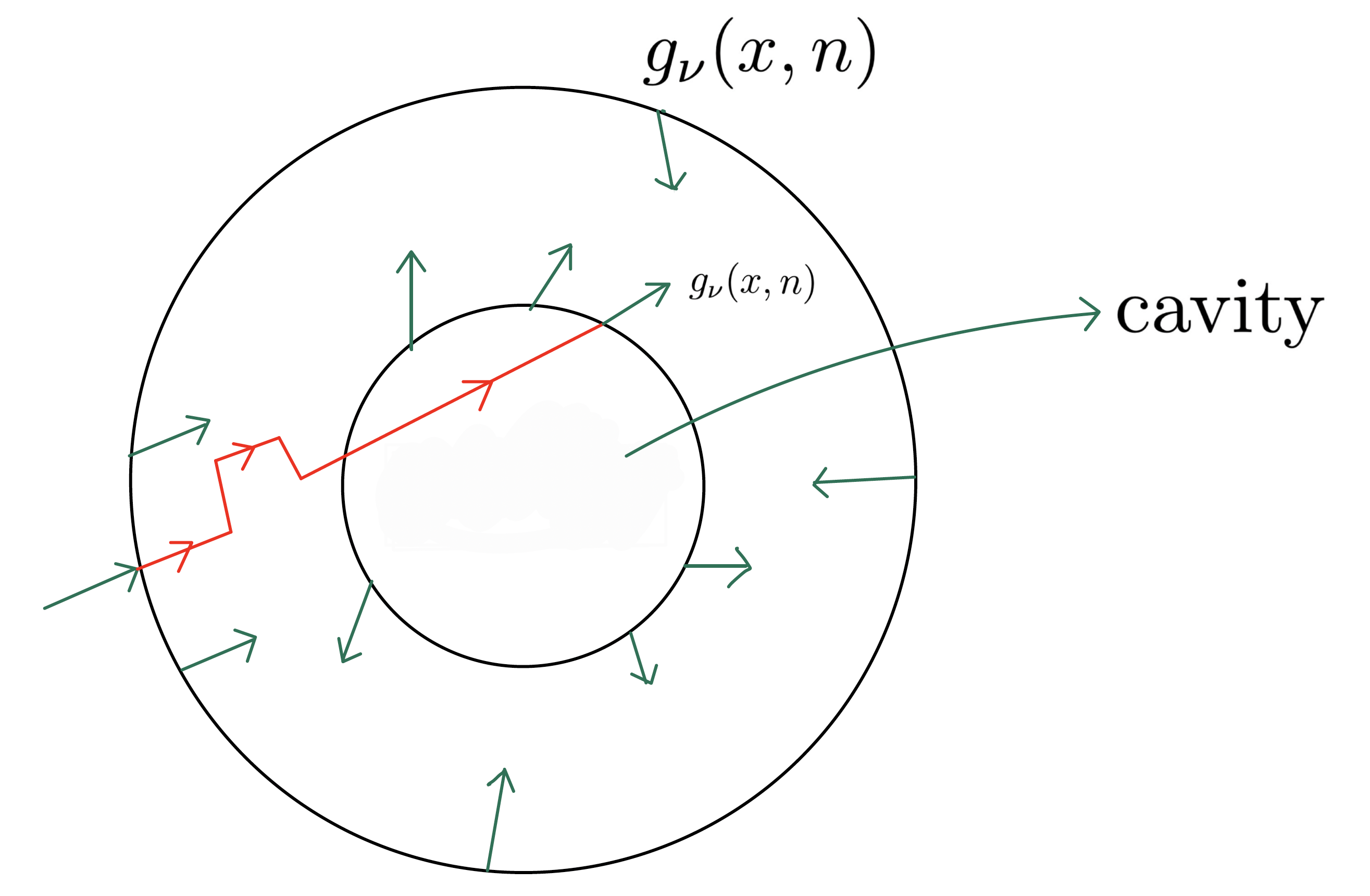}
        \caption{Connection between the values $I_\nu(x,n)$ at different boundary points in non-convex domain}
        \label{fig4}
    \end{figure}
    However, we can solve a different type of boundary problem. For instance, we can prescribe $I_\nu(x,n)$ at the boundary of the convexification of $\Omega$ (denoted as $\Omega_{conv}$). Alternatively, we could prescribe the incoming radiation at the boundary of a ball $B_R(0)$ containing $\Omega.$ See Figure \ref{fig5}.
 \begin{figure}[h]
        \centering
        \includegraphics[scale=0.15]{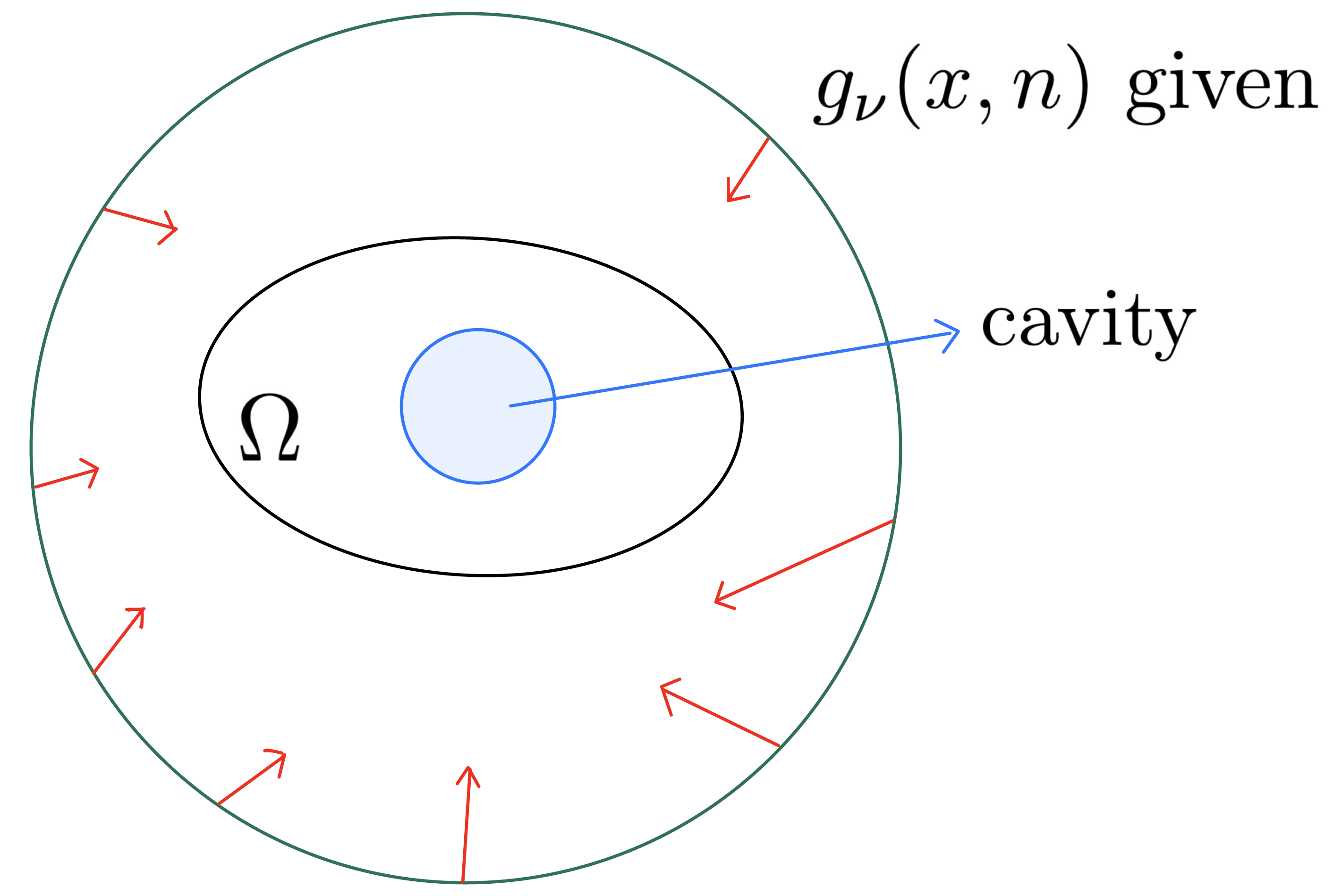}
        \caption{Notice that in this case, we do not prescribe $I_\nu$ in the whole boundary of $\Omega$.}
        \label{fig5}
    \end{figure}
\end{enumerate}
\end{remark}
Under the Hypothesis \ref{modelling assumptions}, we consider the radiative transfer equation \eqref{General rad eq} coupled with the non-local temperature equation \eqref{heat equation}. Here the scattering kernels further satisfy \eqref{scattering} and \eqref{scattering assume}. The assumption \eqref{scattering assume} on the kernel $K$ implies that the scattering does not modify the frequency.  The main assumption in this model above is that the non-elastic mechanisms yielding LTE in the gas molecules' distributions are extremely fast, and the scattering cannot modify much the Boltzmann ratio.  In the model, we also assume that $B_\nu(T)$ is the black-body radiation. 
Recall that we write the flux of radiation energy at $x$ as \begin{equation}\label{flux.radiation.energy}\mathcal{F}=\mathcal{F}(x)\eqdef \int_0^\infty d\nu \int_{\mathbb{S}^2}nI_\nu(x,n)dn.\end{equation}Then the divergence-free-flux condition \eqref{heat equation} for the radiation energy (i.e., the non-local heat equation)  gives
\begin{equation}\notag\nabla_x\cdot \mathcal{F}(x)=\int_0^\infty d\nu \int_{\mathbb{S}^2}n\cdot \nabla_xI_\nu(x,n)dn=0\text{ at any }x\in\Omega.\end{equation}The main goal of this paper is to study mathematically whether the temperature distribution $T$ at each point can be determined uniquely by \eqref{General rad eq}-\eqref{scattering assume} and a suitable boundary condition for the radiation at $\partial \Omega$, if we impose the divergence-free total-flux condition \eqref{heat equation}.  (See also \cite{2109.10071} for the conservation law.) 
Possible boundary conditions on the $C^1$ boundary $\partial\Omega$ of an open convex bounded domain $\Omega $ include the following incoming boundary condition
$$I_\nu(x,n)= g_\nu(x,n)\ge 0,\ \text{ for } x\in\partial\Omega\text{ and }n\cdot n_x<0,$$ where $n_x$ is the outward normal vector at the boundary point $x\in \partial \Omega.$   
\begin{remark}
     When the source $g_\nu$ does not depend on the position $x$ then it can also be understood that the radiation is coming to the body from infinity $(|x|= \infty).$ In this case, we obtain an incoming boundary condition for the intensity of the radiation $I_\nu(t,x,n)=g_\nu(n)$ on the body's boundary $x\in\partial \Omega $ for the incoming direction $n\cdot n_x<0.$
\end{remark}

\begin{remark}We will address the following problems (which have increasing level of difficulty):
\begin{enumerate}
    \item Materials where there is only scattering (Section \ref{sec.scattering}).
    \item Problems with only emission-absorption with a constant coefficient $\alpha_\nu^a=\alpha>0$ (Section \ref{sec.constant absorption}).
    \item Materials in which there is only emission-absorption with the coefficient $\alpha_\nu^a>0$ that depends only on the frequency $\nu$, 
    but not on the local temperature distribution $T(x)$ (Section \ref{sec.nu dependence}).
    \item Bodies with both scattering and emission-absorption with the coefficients $\alpha^s_\nu$ and $\alpha_\nu^a$ depend on the frequency $\nu$ 
    (Section \ref{sec.combined}).
\end{enumerate} Notice that in the second and the third cases, we use so-called the Grey approximations: i.e., $\alpha^a_\nu$ independent on $\nu$.

On the other hand, possible ways of modifying the problem that we do not consider in this paper are:
\begin{enumerate}
\item To consider problems with the absorption-emission and the scattering coefficients $\alpha^a_\nu$ and $\alpha^s_\nu$ also depend on the position $x$.
\item To include the absorption-emission coefficient that depends on the local temperature $T(x)$.
    \item To consider models that contain also the effect of heat conduction and/or convec tion in addition to the radiation. \item More general types of boundary conditions, for instance, boundaries reflecting part of the incoming radiation. \item Non-LTE situations for which we need to consider different temperatures for each of the microscopic species at each point of the material.
\end{enumerate}

\end{remark}

\subsection{Main strategy}
In all the different cases (with or without scattering term, and with or without the dependency of the coefficients on the radiation frequency), our main strategy for the proof of the well-posedness to the boundary-value problem is to reduce the stationary radiation equation and the ``divergence-free-radiation-flow" equation to a system of ``non-local" elliptic operators with external source. Then we will provide several techniques to tackle the non-local elliptic equations with sources. 

For the proof of the existence of solutions, especially when the situation is more complicated and the emission-absorption coefficient $\alpha^a$ depends on the frequency
, our goal is to deal with the stationary \textit{non-local heat equation} \eqref{heat equation} such that it is decomposed into the boundary contribution part and the Planck emission part. Then via taking several changes of variables that are closely associated with the characteristic trajectories of the emitted photon, we try to reduce the integro-differential equation into a form for which we can use the Banach fixed-point argument or the Schauder-type fixed-point argument. Here the sign of the boundary contribution is crucial and is subject to be proved as well.

The situation is further complicated if we consider the case when the radiation acting on materials undergoes the scattering as well as the emission and absorption as in Section \ref{sec.combined}, where one of the crucial ideas for the proof of existence is on the decomposition of the emission operator into the delta functions at each gas particle. This approach is motivated by the classical Green-function approach and this leads us to obtain the maximum principle and to continue with the Schauder-type fixed-point argument.

In particular, we will derive a uniqueness theorem using the entropy production argument and the variational principle in the specific case of constant temperature at the boundary when the contraction mapping principle would not work.


 \subsection{Outline}
 The rest of the paper is organized as follows.  In Section \ref{sec.scattering}, we first consider a special case with the pure scattering, which is analogous to so-called Lorentz gases, and prove the well-posedness of the boundary value problem when the gases do not emit or absorb radiation and the photons undergo the scattering only. In Section \ref{sec.emission only}, we consider the case when the radiation interacts with materials via emission and absorption, but without scattering. 
 We provide the proofs of existence and uniqueness of solutions in some particular subcases. In Section \ref{sec.entropy}, we give the formula for the entropy of the radiation and provide the entropy production formula. This will be a crucial step for the proof of the uniqueness in Section \ref{sec.uniqueness variation} where we make use of the entropy and a variational principle to prove the uniqueness of a solution with incoming radiation at equilibrium with constant temperature. 
 Lastly, in Section \ref{sec.combined} we consider the case when the radiation acting on materials undergoes the scattering as well as the emission and absorption.

\subsection{Notations}
 Throughout the paper, we use the following notation $A\approx B$ if there is a uniform constant $c>0$ such that 
 $\frac{1}{c}A\le B\le c A.$
\section{Well-posedness in the case of pure scattering}\label{sec.scattering}
In this section, we study the model with scattering only. This model is analogous to the case of Lorentz gases with randomly positioned scatterers. Under the assumption that there is no absorption or emission of radiation via the excitation and de-exitation of gas molecules, the model \eqref{General rad eq} reduces to 
\begin{equation}
    \label{onlyscattering eq}
      n\cdot \nabla_x I_\nu = - \alpha^s_\nu(T) I_\nu + \alpha^s_\nu(T)\int_{\mathbb{S}^2} K(n,n')I_\nu(x,n')dn',\end{equation} with \begin{equation}\label{eq.sec3.conservation}\int_{\mathbb{S}^2} K(n,n')dn=1.\end{equation} 
In this case, we remark that the temperature distribution $T$ does not play a crucial role because there is no absorption nor emission of photon; this can also be seen by \eqref{heat eq scattering only} below which is a direct consequence of \eqref{onlyscattering eq}. Then given any temperature distribution $T$, we find a distribution of radiation. This is one of the main differences with other cases with absorption only that will be introduced in Section \ref{sec.emission only} where the transfer of radiation depends crucially on the local temperature.      
\begin{remark}[Model with  Rayleigh scattering only] Examples of the model include the model with Rayleigh scattering only. 
In the model, one considers the following reaction
$$ A+h\nu(n_1) \leftrightarrow A+h\nu(n_2),$$ where $n_1,n_2\in \mathbb{S}^2$ stand for the directions of radiation before and after the scattering. The model can be written as
\begin{equation}\label{eq.scatteringonly}
    n\cdot \nabla_x I_\nu=-\alpha_\nu I_\nu +\alpha_\nu \int_{\mathbb{S}^2} K(n,n')I_\nu(x,n')dn',\end{equation} with $$\int_{\mathbb{S}^2} K(n,n')dn'=1.$$
 We remark here that the main difference of this model from other models that we consider in other sections of the paper is on the fact  that the local temperature distribution $T=T(x)$ is arbitrary in this scattering-only case; this is due to the fact that the solution $I_\nu(x,n)$ to \eqref{eq.scatteringonly} can be determined independently of the local temperature $T(x).$ Notice that for each solution $I_\nu(x,n)$ of \eqref{eq.scatteringonly} the divergence-free-flux condition $\nabla\cdot \mathcal{F}=0$ holds independently of $T(x)$. Thus the temperature distribution can be given arbitrarily. This will not happen in the case where there is also absorption and emission of radiation (cf. Section \ref{sec.emission only}).  Models analogous with \eqref{eq.scatteringonly} with only the scattering have been studied extensively in the mathematical literature, often in the context of neutron diffusion  \cite{MR533346,MR3324149,MR3686005}. This type of equations appears also in the theory of so-called Lorentz gases \cite{MR3356368,MR3324151}.
\end{remark}

Through the rest of the section, we prove the following well-posedness theorem:
\begin{theorem} \label{thm.scattering only}Fix any $\nu>0.$ 
Let $\Omega$ be a bounded domain with $C^1$ convex boundary  $\partial\Omega$. Suppose $\alpha^s_\nu(T)>0$ for any $T>0$. Suppose the local temperature distribution $T=T(x)$ and the scattering coefficient $\alpha^s_\nu(T)$ is given such that $\frac{1}{\alpha^s_\nu(T(\cdot))}$ is Lipschitz continuous. Consider the incoming boundary condition
\begin{equation}\label{incoming boundary}I_\nu(x,n)= g_\nu(x,n)\ge 0,\ \text{ for } x\in\partial\Omega\text{ and }n\cdot n_x<0,\end{equation} where $n_x$ is the outward normal vector at the boundary point $x\in \partial \Omega.$ Suppose that $g_\nu(x,n)\in L^\infty(\partial\Omega;L^1(\mathbb{S}^2\cap\{n\cdot n_x<0\})).$  Then the system of the stationary radiative transfer equation for pure scattering \eqref{onlyscattering eq}  has a unique solution $I_\nu(x,n)$ in $L^\infty(\Omega;L^1( \mathbb{S}^2) )$.
\end{theorem}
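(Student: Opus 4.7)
The plan is to recast the stationary transport equation as a fixed-point equation via Duhamel's formula along backward characteristics, and to obtain the unique solution by the Banach contraction principle in $L^\infty(\Omega;L^1(\mathbb{S}^2))$.

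\textbf{Step 1 (Duhamel reformulation).} The convexity of $\Omega$ together with the $C^1$ regularity of $\partial\Omega$ guarantees that for each $(x,n)\in\Omega\times\mathbb{S}^2$ the backward exit time $s(x,n):=\inf\{t>0:\,x-tn\notin\bar\Omega\}$ is finite and positive, and the backward chord $\{x-tn:0\le t\le s(x,n)\}$ lies in $\bar\Omega$. Treating the scattering gain as a source in \eqref{onlyscattering eq} and integrating along this chord with the inflow condition \eqref{incoming boundary} yields the equivalent identity
\begin{equation*}
I_\nu(x,n)=g_\nu(x-s(x,n)n,n)e^{-\Sigma(x,n,s(x,n))}+\int_0^{s(x,n)}e^{-\Sigma(x,n,t)}\alpha^s_\nu(T(x-tn))\int_{\mathbb{S}^2}K(n,n')I_\nu(x-tn,n')\,dn'\,dt,
\end{equation*}
where $\Sigma(x,n,t):=\int_0^t\alpha^s_\nu(T(x-rn))\,dr$ is the optical depth along the backward ray. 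Denote the right-hand side by $\mathcal{T}[I_\nu]$; solving the boundary-value problem is equivalent to finding a fixed point of $\mathcal{T}$.

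\textbf{Step 2 (A priori estimates and change of variables).} The Lipschitz hypothesis on $1/\alpha^s_\nu(T(\cdot))$ on the compact set $\bar\Omega$ forces $\alpha^s_\nu(T(\cdot))$ to be continuous and bounded both above and away from zero, so the total optical depth satisfies $\Sigma(x,n,s(x,n))\le\Sigma_{\max}<\infty$ uniformly in $(x,n)$. The boundary contribution $(x,n)\mapsto g_\nu(x-s(x,n)n,n)e^{-\Sigma}$ belongs to $L^\infty(\Omega;L^1(\mathbb{S}^2))$ by a slicing argument that transports the regularity assumption $g_\nu\in L^\infty(\partial\Omega;L^1(\mathbb{S}^2\cap\{n\cdot n_x<0\}))$ from $\partial\Omega$ into $\Omega$, again using convexity. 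For the scattering part, I would apply the backward spherical parametrization $y=x-tn$, so that $t=|x-y|$, $n=\hat n_{xy}:=(x-y)/|x-y|$, $dt\,dn=|x-y|^{-2}dy$, and combine the identity
\begin{equation*}
\int_0^{s(x,\hat n)}\alpha^s_\nu(T(x-r\hat n))e^{-\Sigma(x,\hat n,r)}\,dr=1-e^{-\Sigma(x,\hat n,s(x,\hat n))}
\end{equation*}
with the normalization $\int_{\mathbb{S}^2}K(n,n')\,dn=1$. This produces a non-expansive bound for $\mathcal{T}$ in the $L^\infty(\Omega;L^1(\mathbb{S}^2))$ norm, with the room toward strict contraction coming from the uniform upper bound $\Sigma_{\max}<\infty$.

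\textbf{Step 3 (Fixed point).} Upgrading the non-expansive estimate to a strict contraction $\|\mathcal{T}[I]-\mathcal{T}[J]\|\le\theta\|I-J\|$ with $\theta<1$---either directly from the factor $1-e^{-\Sigma_{\max}}$ after the correct bookkeeping of the $(n,n')$ integrations, or, if the single-step estimate degenerates, by iterating $\mathcal{T}$ a finite number $N$ of times or by passing to an equivalent norm weighted by a function of the optical depth---allows Banach's contraction principle to give a unique fixed point $I_\nu\in L^\infty(\Omega;L^1(\mathbb{S}^2))$, which is the desired unique solution. Non-negativity of $I_\nu$ follows from $g_\nu\ge 0$ and the positivity-preservation of $\mathcal{T}$, read off the Neumann series $I_\nu=\sum_{k\ge0}\mathcal{T}_{\mathrm{lin}}^k[g_\nu^\sharp]$.

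\textbf{Main obstacle.} The technical heart of the argument is Step 2 -- 3: the scattering kernel $K(n,n')$ is normalized with respect to only one of its variables, so the interplay between the output angular integration, the scattering input integration, and the optical-depth attenuation must be orchestrated carefully in order to produce a uniform contraction constant strictly less than $1$. In particular the estimate must accommodate the degeneration of the backward chord length $s(x,n)$ as $(x,n)$ approaches the inflow boundary; this is precisely where the convexity of $\Omega$ and the uniform positivity of $\alpha^s_\nu(T(\cdot))$ are used.
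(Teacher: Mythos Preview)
Your proposal is correct and follows essentially the same route as the paper: reduce to a fixed-point equation via the Duhamel formula along backward rays, then contract in $L^\infty_x L^1_n$ using the normalization $\int_{\mathbb{S}^2}K(n,n')\,dn=1$ together with the uniform bound $1-e^{-\Sigma_{\max}}<1$ on the attenuated line integral. The only cosmetic difference is that the paper performs the optical-depth reparametrization $d\tau_\nu=\alpha^s_\nu(T(X))\,|dX|$ at the outset (this is where the Lipschitz hypothesis on $1/\alpha^s_\nu(T(\cdot))$ enters, to define the characteristic ODE), so that the attenuation factor becomes simply $e^{-(s-s')}$; your identity $\int_0^{s}\alpha^s_\nu e^{-\Sigma}\,dr=1-e^{-\Sigma}$ achieves the same thing without the global change of variables, and your hedging about iterating $\mathcal{T}$ or reweighting the norm is unnecessary---the single-step estimate already yields the strict contraction constant $\theta=1-e^{-\Sigma_{\max}}$ once you take the supremum of the line integral before applying Fubini to the $(n,n')$ integrations.
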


\begin{proof} First of all, we integrate \eqref{onlyscattering eq} with respect to $n\in\mathbb{S}^2$ and obtain
        \begin{multline}\label{heat eq scattering only}\nabla_x\cdot \mathcal{F}_\nu\eqdef \nabla_x\cdot \int_{\mathbb{S}^2}dn\ nI_\nu(x,n)\\= \int_{\mathbb{S}^2}dn\ \alpha^s_\nu(T)\left[\int_{\mathbb{S}^2} K(n,n')I_\nu(x,n')dn'-I_\nu(x,n)\right]\\
      = \alpha^s_\nu(T)\int_{\mathbb{S}^2}\ I_\nu(x,n')dn'- \alpha^s_\nu(T)\int_{\mathbb{S}^2}\ I_\nu(x,n)dn=0,\end{multline} for any $T$ and $\nu>0.$
\begin{figure}
    \centering
    \includegraphics[scale=0.12]{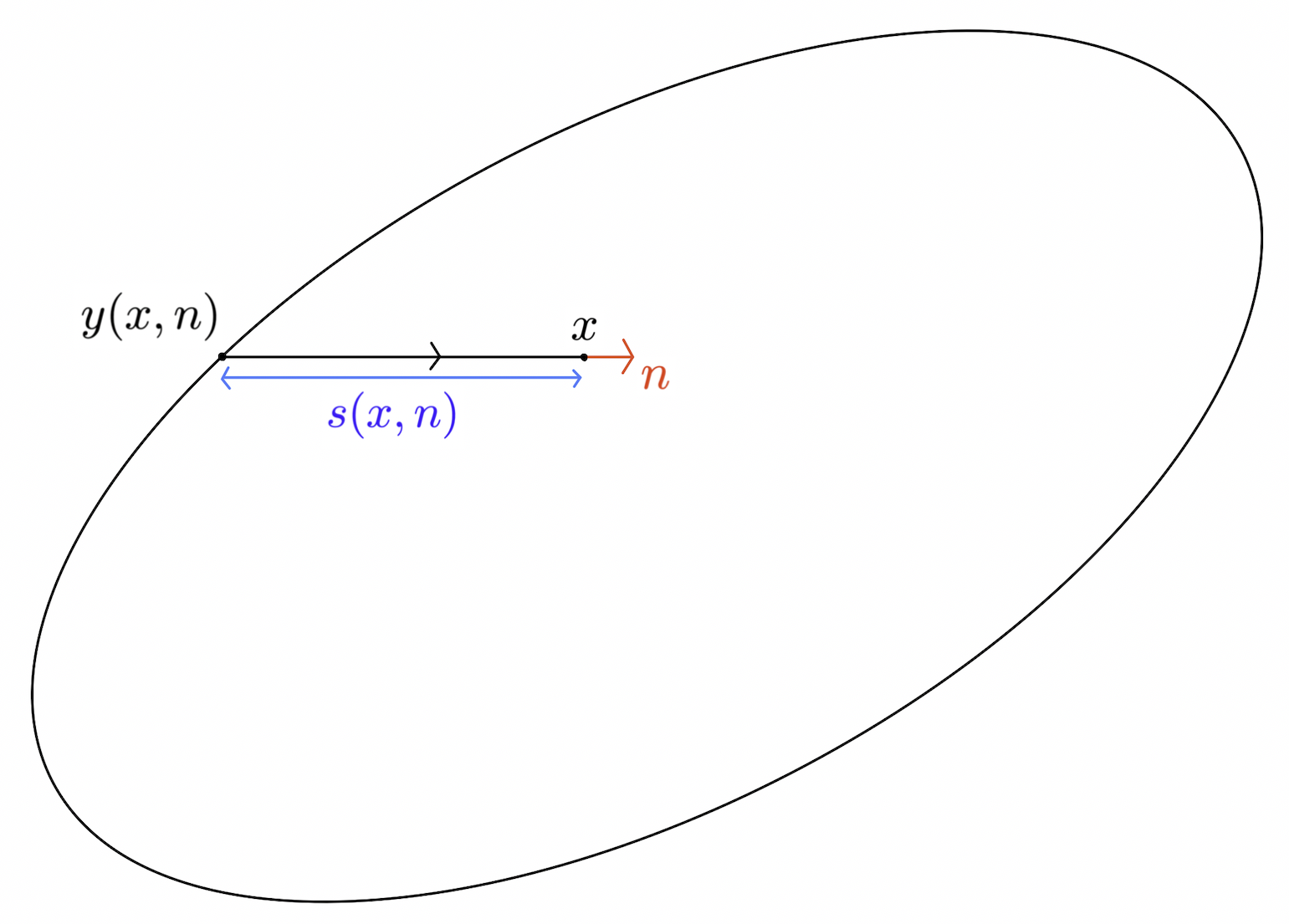}
    \caption{Trajectory in the direction $n$ and the boundary point $y(x,n)$}
    \label{fig:opt para}
\end{figure}
Now, for each $(x,n)\in \Omega \times \mathbb{S}^2$, we define a new coordinate system with new variables $(y,s)= (y(x,n),s(x,n))$. As introduced in Figure \ref{fig:opt para}, we define $y=y(x,n)\in \partial\Omega$ as the boundary point that intersects with the backward trajectory in direction $-n\in\mathbb{S}^2$ starting from $x\in \Omega.$ Then we define $s=s(x,n)\ge 0$ as the contour length by which we mean the traveling distance of the trajectory from $y$ to $x$ such that $x=y+sn.$ Then for the \textit{optical length} parameter $\tau_\nu\in [0,s],$ we define the characteristic trajectory $$X(\tau_\nu)=X(\tau_\nu;s,x,n),\ N(\tau_\nu)=N(\tau_\nu;s,x,n)=n,$$ such that $X(0)=y,\ X(s)=x,$ and 
$$\frac{dX(\tau_\nu)}{d\tau_\nu}= \frac{n}{\alpha^s_\nu(T(X(\tau_\nu)))}\text{ and } \frac{dN(\tau_\nu)}{d\tau_\nu}=0. $$ We note the characteristic trajectory, since $\frac{1}{\alpha^s_\nu(T(\cdot))}$ is Lipschitz continuous. Then we immediately observe that$$\int_y^x \alpha^s_\nu(T(X(\tau_\nu)) d(X(\tau_\nu))=\int_0^s n d\tau_\nu= ns,$$ and that 
\eqref{onlyscattering eq} now reads as
$$
   \frac{dI_\nu}{d\tau_\nu}=\frac{n}{\alpha^s_\nu(T)} \cdot \nabla_x I_\nu
    =\int_{\mathbb{S}^2} K(n,n')I_\nu(x,n')dn'-I_\nu(x,n).
$$   Thus we obtain\begin{equation}\label{eq.integro}
       I_\nu(x,n)=g_\nu(y,n)e^{-s}+\int_0^s e^{-(s-s')}ds'\int_{\mathbb{S}^2} K(n,n')I_\nu(y+s'n', n')dn'.
   \end{equation}
   Now define $$\mathcal{A}[I_\nu](x,n)\eqdef g_\nu(y,n)e^{-s}+A[I_\nu](x,n),$$ where
   $$A[I_\nu](x,n)\eqdef \int_0^{s(x,n)} e^{-(s(x,n)-s')}ds'\int_{\mathbb{S}^2} K(n,n')I_\nu(y+s'n', n')dn'.$$ For $(x,n)\in \Omega \times \mathbb{S}^2$, $A$ is well-defined. Then we observe
   \begin{multline*}\|A[I_\nu]\|_{L^\infty_xL^1_n} =\sup_{x\in\rth}\int_{\mathbb{S}^2}dn \int_0^{s(x,n)}ds' e^{-(s(x,n)-s')}\int_{\mathbb{S}^2} dn'K(n,n')I_\nu(y+s'n', n')\\\le \sup_{(x,n)\in\rth\times \mathbb{S}^2}\bigg(\int_0^{s(x,n)} e^{-(s(x,n)-s')}ds'\bigg)\int_{\mathbb{S}^2} K(n,n')dn\int_{\mathbb{S}^2}\|I_\nu(\cdot,n')\|_{L^\infty_x}dn'\\
   \le  (1-e^{-s_{max}})\|I_\nu\|_{L^\infty_xL^1_n},\end{multline*} since $\int_{\mathbb{S}^2} K(n,n')dn=1$. We denote the maximum diameter in the \textit{metric} of the optical path as $s_{max}$ then we have $0\le s(x,n)\le s_{max}$ and $\theta \eqdef (1-e^{-s_{max}})<1.$ Therefore, $\|A[I_\nu]\|_{L^\infty_xL^1_n}\le \theta \|I_\nu\|_{L^\infty_xL^1_n}$ with $\theta <1$. Thus, we can further show that $\mathcal{A}$ is a contraction mapping from $L^\infty(\Omega;L^1( \mathbb{S}^2) )\rightarrow L^\infty(\Omega;L^1( \mathbb{S}^2) )$, since $A$ (and hence $\mathcal{A}$) is linear in $I_\nu$ and we have
  $$
       \|\mathcal{A}[I_{\nu,1}]-\mathcal{A}[I_{\nu,2}]\|_{L^\infty_xL^1_n}=\|A[I_{\nu,1}]-A[I_{\nu,2}]\|_{L^\infty_xL^1_n}\le \theta \|I_{\nu,1}-I_{\nu,2}\|_{L^\infty_xL^1_n},
$$with $\theta<1$.
  Thus, by the Banach fixed-point theorem, for any $\nu>0$, we have the existence and the uniqueness of a fixed-point $I_\nu(x,n)$ in $L^\infty(\Omega;L^1( \mathbb{S}^2) )$, which is the solution of \eqref{eq.integro}. 
   \end{proof}
   This completes the proof of the wellposedness in the case when the heat is transferred only by the photons' scattering. In the next section, we consider another generic situation in which the heat is transferred only by the radiation without any scattering of photons.
   
   \section{Well-posedness for the model with emission-absorption only}\label{sec.emission only}Another physical situation that we consider next is the case with emission-absorption only. In this model, we assume that the scatterer's effect is negligible compared to the effect of the emission and the absorption of radiation from the molecules. 
   Then the stationary equation for this situation reads as
   \begin{equation}
       \label{eq. absorption only}
       n\cdot \nabla_x I_\nu =\alpha_\nu^a (T)(B_\nu(T)-I_\nu),
   \end{equation}
   where $I_\nu=I_\nu(x,n)$ is the intensity of the radiation of the frequency $\nu$ at $x\in \Omega$ and $n\in\mathbb{S}^2$, $\alpha^a_\nu(T)$ is the absorption coefficient (see (2.75) of \cite{rutten1995radiative}, for instance), and $B_\nu(T)$ is the Planck coefficient for the black-body radiation at temperature distribution $T$ in the LTE case:
   \begin{equation}
       \label{blackbody}
       B_\nu(T)=\frac{2h\nu^3}{c^2}\frac{1}{e^{\frac{h\nu}{kT}}-1}. 
   \end{equation}Note that $B_\nu(T)$ is monotonically increasing in $T$ for each $\nu$ and $B_\nu(T)=0$ is equivalent to $T=0.$ 
   By making a change of variables $\nu\to \zeta \eqdef\frac{h\nu}{kT} $, we obtain that 
   \begin{multline}\label{integration of blackbody}
       \int_0^\infty B_\nu(T)d\nu = \int_0^\infty\frac{2h\nu^3}{c^2}\frac{1}{e^{\frac{h\nu}{kT}}-1}d\nu=\int_0^\infty\frac{2hk^3T^3}{h^3c^2}\frac{\zeta^3}{e^{\zeta}-1}\frac{kT}{h}d\zeta\\=\frac{\pi^4}{15}\frac{2hk^3T^3}{h^3c^2}\frac{kT}{h}=\sigma T^4,
   \end{multline}
   where we define $
       \sigma\eqdef \frac{2\pi^4k^4}{15h^3c^2}>0.
 $

   In this section, we assume $\Omega$ is a convex domain. The coefficient $\alpha_\nu$ can be considered as the spectral lines for each $\nu$ or, more generally, the averages of these processes. There can also be more complicated dependence for the coefficient $\alpha_\nu$ if we have different \textit{bound states}. In systems with a given number of possible excited states and ionizations, it is also possible to obtain formula for $\alpha_\nu(T)$ assuming LTE using the Einstein relations and the cross-section for some of the collision processes.     As before, we assume the incoming boundary condition for the radiation \eqref{incoming boundary}.
    
    \subsection{The case with the constant absorption coefficient}\label{sec.constant absorption} The simplest case that we can consider is when the absorption coefficient $\alpha^a_\nu (T)=\alpha>0$ is independent of $\nu$ and $T.$ By rescaling $x$, we can assume without loss of generality that $\alpha_\nu = 1.$ Then \eqref{eq. absorption only} reduces to 
    \begin{equation}\label{eq.constant absorption}n\cdot \nabla_x I_\nu =B_\nu(T)-I_\nu.\end{equation}
    A similar boundary-value problem was considered in \cite{2109.10071}, but the model that we consider in this subsection is more general in the sense that this model now depends also on $\nu. $ Also, we consider the boundary profile \begin{equation}
        \label{boundary.1}  \begin{split}&I_\nu(x,n)=g_\nu(n)\ge 0,\  \text{for }x\in \partial\Omega,\ n\in \mathbb{S}^2,\ n\cdot n_x<0,\\
        &\int_{\mathbb{S}^2}\int_0^\infty g_\nu(n)d\nu dn=\int_{\mathbb{S}^2}g(n)dn <+\infty,\
        \end{split}
    \end{equation} such that the boundary condition depends only on the radiative direction $n \in \mathbb{S}^2$.  This boundary condition implies that the radiation is coming from a very far source of infinite distance.
    
    In the rest of this subsection, we prove the following well-posedness theorem:
    \begin{theorem}\label{thm3.1}
Let $\Omega$ be a bounded domain with $C^1$ convex boundary  $\partial\Omega$.   Then the system of the stationary radiative transfer equation with constant absorption coefficient  \eqref{eq.constant absorption} coupled with the conservation law \eqref{heat equation} and the incoming boundary condition \eqref{boundary.1}  admits a unique positive solution of the local temperature distribution $T=T(x)\in L^4(\Omega)$.
\end{theorem}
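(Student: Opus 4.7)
The plan is to reduce the coupled system \eqref{eq.constant absorption}--\eqref{heat equation}--\eqref{boundary.1} to a single scalar non-local fixed-point equation for $u(x) := \sigma T^4(x)$ on $\Omega$, and then to solve it by a Banach contraction argument in the spirit of Theorem \ref{thm.scattering only}. First I would integrate \eqref{eq.constant absorption} by Duhamel's formula along the straight characteristic through $(x,n)$; with $y = y(x,n) \in \partial\Omega$ and $s = s(x,n)$ as in the proof of Theorem \ref{thm.scattering only}, the boundary condition \eqref{boundary.1} yields
\begin{equation*}
I_\nu(x,n) = g_\nu(n)\,e^{-s(x,n)} + \int_0^{s(x,n)} e^{-r} B_\nu\!\bigl(T(x-rn)\bigr)\,dr.
\end{equation*}
Next, integrating \eqref{eq. absorption only} against $dn\,d\nu$ and using the divergence-free condition \eqref{heat equation} together with \eqref{integration of blackbody} gives the pointwise identity $4\pi \sigma T^4(x) = \int_0^\infty \int_{\mathbb{S}^2} I_\nu(x,n)\,dn\,d\nu$. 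Substituting the Duhamel representation and setting $g(n) := \int_0^\infty g_\nu(n)\,d\nu \in L^1(\mathbb{S}^2)$ (finite by \eqref{boundary.1}), this becomes the fixed-point equation
\begin{equation*}
u = F + K[u], \qquad F(x) := \tfrac{1}{4\pi}\int_{\mathbb{S}^2} g(n)\,e^{-s(x,n)}\,dn,
\end{equation*}
with $K[u](x) := \tfrac{1}{4\pi}\int_{\mathbb{S}^2}\int_0^{s(x,n)} e^{-r}\, u(x-rn)\,dr\,dn$.

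The key analytic observation is that the change of variables $z = x - rn$ (with $dr\,dn = dz/|x-z|^2$) combined with the convexity of $\Omega$ converts $K$ into the symmetric volume operator
\begin{equation*}
K[u](x) = \tfrac{1}{4\pi}\int_\Omega \frac{e^{-|x-z|}}{|x-z|^2}\,u(z)\,dz.
\end{equation*}
Writing $d := \operatorname{diam}(\Omega)$ and testing on $u \equiv 1$ gives the uniform bound
\begin{equation*}
K[1](x) = 1 - \tfrac{1}{4\pi}\int_{\mathbb{S}^2} e^{-s(x,n)}\,dn \le 1 - e^{-d} =: \theta < 1, \qquad x \in \Omega,
\end{equation*}
with an even better bound on $\partial\Omega$ since $s(x,n) = 0$ on the hemisphere $\{n \cdot n_x < 0\}$. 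Because $K$ is positivity-preserving this gives $\|K\|_{L^\infty \to L^\infty} \le \theta$, and the symmetry of the kernel in $(x,z)$ together with Fubini yields the analogous contraction $\|K\|_{L^1 \to L^1} \le \theta$.

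By the Banach fixed-point theorem there is a unique $u \in L^\infty(\Omega)$ solving $u = F + K[u]$; the Neumann series $u = \sum_{k \ge 0} K^k F$ then shows $u \ge 0$, with strict positivity whenever $g \not\equiv 0$. Setting $T(x) := (u(x)/\sigma)^{1/4}$ delivers a positive $T \in L^\infty(\Omega) \subset L^4(\Omega)$, and uniqueness in $L^4(\Omega)$ follows from the $L^1$-contraction bound since $L^4(\Omega) \subset L^1(\Omega)$ on the bounded domain. The step I expect to require the most care is the geometric analysis of the optical-length function $s(x,n)$, in particular verifying that $(r,n) \mapsto x - rn$ is a bijection onto $\Omega$ up to a null set and that $s(x,n) \le d$ uniformly; both rest on convexity and on $C^1$-regularity of $\partial\Omega$. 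Once that is in hand, the contraction estimate and the Neumann series argument are essentially mechanical.
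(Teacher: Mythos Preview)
Your proposal is correct and follows essentially the same strategy as the paper: integrate \eqref{eq.constant absorption} along characteristics, reduce to a scalar fixed-point equation for $u=\sigma T^4$ with a convolution-type kernel, and close by a Banach contraction using that the kernel integrates to strictly less than $1$ on the bounded domain.

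The one difference worth recording is how the scalar equation is obtained. You integrate the transport equation itself against $dn\,d\nu$ and invoke $\nabla_x\cdot\mathcal{F}=0$ to get $4\pi\sigma T^4(x)=\int\!\!\int I_\nu\,dn\,d\nu$ directly; substituting Duhamel then gives a source $F(x)=\tfrac{1}{4\pi}\int_{\mathbb{S}^2}g(n)e^{-s(x,n)}dn$ that is manifestly nonnegative. The paper instead first writes the flux $\mathcal{F}=S+\mathcal{U}[T]$, expresses $\mathcal{U}[T]$ as a convolution, and then computes $\nabla_x\cdot\mathcal{U}[T]$ explicitly; the resulting source is $-\tfrac{1}{4\pi}\nabla_x\cdot S$, whose positivity requires the separate lemma $n\cdot\nabla_x s(x,n)=1$. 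Your route is a bit more economical and also yields the explicit $L^\infty$ contraction constant $\theta=1-e^{-d}$, whereas the paper works in $L^1$ and uses only $\int_\Omega\tfrac{e^{-|x-\eta|}}{4\pi|x-\eta|^2}\,dx<1$. Substantively, though, the two arguments coincide.
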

\begin{proof} We first 
define $y(x,n)$ and $s(x,n)$ the same as in Section \ref{sec.scattering}. Then the following characteristic trajectory $(X,N)$ along the optical path is well-defined where $\tau_\nu$ is the optical length parameter as introduced in Section \ref{sec.scattering}:$$X(\tau_\nu)=X(\tau_\nu;s,x,n),\ N(\tau_\nu)=N(\tau_\nu;s,x,n)=n,$$ such that $X(0)=y,\ X(s)=x,$ and 
$$\frac{dX(\tau_\nu)}{d\tau_\nu}= n\text{ and } \frac{dN(\tau_\nu)}{d\tau_\nu}=0. $$Then 
\eqref{eq.constant absorption} now reads as
$$
   \frac{dI_\nu}{d\tau_\nu}=n \cdot \nabla_x I_\nu
    =B_\nu(T)-I_\nu(x,n).
$$ 
By integrating this equation, we obtain
\begin{equation}\label{eq.integro1}
       I_\nu(x,n)=g_\nu(n)e^{-s(x,n)}+\int_0^{s(x,n)} e^{-(s(x,n)-\xi)} B_\nu(T(y(x,n)+\xi n))d\xi.
   \end{equation}
   Then, we have
   \begin{multline*}
       \mathcal{F}=\mathcal{F}(x)=\int_0^\infty d\nu \int_{\mathbb{S}^2}dn \ nI_\nu(x,n)\\
       =\int_0^\infty d\nu \int_{\mathbb{S}^2}dn \ ng_\nu(n)e^{-s(x,n)}
     \\  +\int_0^\infty d\nu \int_{\mathbb{S}^2}dn \ \int_0^{s(x,n)} e^{-(s(x,n)-\xi)}d\xi\  nB_\nu(T(y(x,n)+\xi n))\\
     =  \int_{\mathbb{S}^2}dn \ ng(n)e^{-s(x,n)}
     \\  + \int_{\mathbb{S}^2}dn \ \int_0^{s(x,n)} e^{-(s(x,n)-\xi)}d\xi\  n\sigma (T(y(x,n)+\xi n))^4\\
       \eqdef S(x)+\mathcal{U}[T](x),
   \end{multline*}where we recall that $
       \sigma\eqdef \frac{2\pi^4k^4}{15h^3c^2}>0.
 $
    Then by the \textit{non-local heat equation} \eqref{heat equation}, we obtain
   $$\nabla_x
   \cdot ( S+\mathcal{U}[T])(x)=0.$$
   In order to compute the divergence of $\mathcal{U}[T],$ we change the representation of the operator $\mathcal{U}$ to a convolution-type operator as follows (cf. \cite[Eq. (3.33) and (3.34)]{2109.10071}). By putting $w(x)\eqdef \sigma (T(x))^4$, we first observe that 
\begin{multline}\label{UT1}
        \mathcal{U}[T](x)=\int_{\mathbb{S}^2}n\left(\int_0^{s(x,n)} e^{-(s(x,n)-\xi)}w(y(x,n)+\xi n)d\xi\right)dn\\
        =\int_{\partial\Omega}dz\int_{\mathbb{S}^2}n\left(\int_0^{s(x,n)} e^{-(s(x,n)-\xi)}w(z+\xi n)\delta(z-y(x,n))d\xi\right)dn.
    \end{multline}
    We make a change of variables $\xi\mapsto \hat{\xi}=s(x,n)-\xi.$ Then we have\begin{multline*}
        \int_{\mathbb{S}^2}n\left(\int_0^{s(x,n)} e^{-(s(x,n)-\xi)}w(y(x,n)+\xi n)d\xi\right)dn\\
        =\int_{\partial\Omega}dz\int_{\mathbb{S}^2}n\left(\int_0^{s(x,n)} e^{-\hat{\xi}}w(z+(s-\hat{\xi}) n)\delta(z-y(x,n))d\hat{\xi}\right)dn.
    \end{multline*}
    Then we make another change of variables $(
    \hat{\xi},n)\mapsto \eta\eqdef x-\hat{\xi} n \in \Omega$. Since $x$ is independent of $\hat{\xi}$ and $n$, we obtain the Jacobian of the change of variables $J(\eta,n)\eqdef \left|\frac{\partial(\hat{\xi},n)}{\partial \eta}\right|=\frac{1}{|x-\eta|^2}$. Thus, for $n=n(x-\eta)=\frac{x-\eta}{|x-\eta|}$, we have
    \begin{multline}\label{UT2}
      \mathcal{U}[T](x)=\int_{\partial\Omega}dz\int_{\mathbb{S}^2}n\left(\int_0^{s(x,n)} e^{-\hat{\xi}}w(z+(s-\hat{\xi}) n)\delta(z-y(x,n))d\hat{\xi}\right)dn
\\   =\int_{\Omega}\frac{x-\eta}{|x-\eta|^3} e^{-|x-\eta|}w(\eta)d\eta,
    \end{multline}since $\hat{\xi}=(x-\eta)\cdot n= |x-\eta|.$
Note that $\mathcal{U}[T](x)$ is in a convolution form and the divergence in $x $ does not require any further regularity on $w$ and hence on $T$. By taking the divergence, we finally  have
   $$\nabla_x\cdot \mathcal{U}[T](x)=4\pi\sigma (T(x))^4-  \int_{\Omega} \frac{e^{-|x-\eta|}}{ |x-\eta|}\sigma(T(\eta))^4d\eta .$$ Therefore, we obtain a non-local equation for the temperature distribution $T$:
   $$\sigma(T(x))^4=  \int_{\Omega} \frac{e^{-|x-\eta|}}{4\pi |x-\eta|}\sigma (T(\eta))^4d\eta -\frac{1}{4\pi}(\nabla_x\cdot S)(x)\eqdef A(\sigma T^4).$$ This is a Dirichlet problem for a non-local elliptic operator. Note that $$\int_{\Omega} \frac{e^{-|x-\eta|}}{4\pi |x-\eta|}dx <\int_{\rth} \frac{e^{-|x-\eta|}}{4\pi |x-\eta|}dx=1.$$  Furthermore, the operator $A$ is linear in $\sigma T^4$ and in addition $A$ is a contractive mapping in $L^1(\Omega)$. Then by the Banach fixed-point theorem, we obtain the existence of a unique solution $\sigma T^4$ in $L^1(\Omega)$.
   
In order to have a physical solution (i.e., a local temperature distribution $T$ as a real-valued function), we must check if $\sigma T^4$ is real-valued and positive.  In order to make sure that the solution $\sigma T^4$ is real-valued and positive, it suffices to show that $-\frac{1}{4\pi}\nabla_x\cdot S>0.$ To this end, for any $x\in \Omega,$ we denote $y(x,n)$ as the intersection of the boundary $\partial\Omega$ and the backward characteristic trajectory $x-sn,$ for $s>0.$ Then we define the distance $|x-y|$ as $s(x,n)$. Then we observe that for any $x_0\in \Omega$,
$$y(x_0+\xi n,n)+s(x_0+\xi n,n)n = x_0+\xi n, $$ and hence $$y(x_0,n)+s(x_0+\xi n,n)n = x_0+\xi n.$$ By differentiating it with respect to $\xi$, we obtain 
$$\frac{d}{d\xi}(s(x_0+\xi n,n)n)=(\nabla_x s(x_0+\xi n,n)\cdot n)n=n.$$ Therefore, we have \begin{equation}\label{partialsn}n\cdot \nabla_xs(x_0,n) =1,\end{equation}
  and hence the definition of $S$ yields that
   \begin{multline*}
       -\nabla_x\cdot S= -\nabla_x\cdot \int_0^\infty d\nu \int_{\mathbb{S}^2}dn \ ng_\nu(n)e^{- s(x,n)}
       \\= \int_{\mathbb{S}^2}dn \  g(n)(n\cdot  \nabla s)e^{- s(x,n)}= \int_{\mathbb{S}^2}dn \  g(n)e^{- s(x,n)}> 0.
   \end{multline*}Therefore, the solution $\sigma T^4$ is real-valued and it is positive. Let $\sigma T^4 = a(x)$ for the positive real-valued solution $a\in L^1(\Omega).$ Then we have only two real-valued solutions $T$ as $T=\pm\left(\frac{a}{\sigma}\right)^{\frac{1}{4}}$. Thus, there is only one positive local temperature distribution $T=\left(\frac{a}{\sigma}\right)^{\frac{1}{4}}$ which solves the system. This completes the proof.
   \end{proof}
   This completes the case when the emission-absorption coefficient is constant. In the next subsection, we generalize the situation and consider the case when the emission-absorption coefficient now depends on the frequency.
   
   \subsection{The case when the absorption coefficient depends
   on \texorpdfstring{$\nu$}{}}\label{sec.nu dependence}
   Now we move onto the case when the absorbtion coefficient depends on the frequency  as well, $\alpha^a_\nu(T)=\alpha_\nu.$    In the rest of this subsection, we prove the following well-posedness theorem:
    \begin{theorem}\label{them 3.2}
Let $\Omega$ be a bounded domain with $C^1$ convex boundary  $\partial\Omega$.  Suppose that the emission-absorption coefficient $\alpha^a_\nu(T)=\alpha_\nu$ in \eqref{eq. absorption only} depends only on the radiation frequency.  Suppose further that 
$\int_0^\infty \alpha_\nu  d\nu<+\infty $ and the boundary profile $g_\nu$ satisfies $\alpha^2_\nu g_\nu(n)\in L^1([0,\infty)_\nu \times \mathbb{S}^2_n)$. Then the system of the stationary radiative transfer equation with the emission-absorption only \eqref{eq. absorption only} coupled with the conservation law \eqref{heat equation} and the incoming boundary condition \eqref{boundary.1} admits a solution of the local temperature distribution $T$ in the space $f(T)\in L^\infty_{\ge 0} (\Omega)$ (i.e., the space of non-negative $L^\infty$ functions on $\Omega$), where we define $$f(T(x))\eqdef \int_0^\infty  \alpha_\nu B_\nu(T(x))\ d\nu.$$ 
Moreover, the solution $f(T)\in L^\infty_{\ge 0} (\Omega)$ is unique. 
\end{theorem}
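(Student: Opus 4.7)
The plan is to reduce the coupled system to a non-local nonlinear integral equation for $f(T)$ and then solve it by a monotone iteration scheme.

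Integrating \eqref{eq. absorption only} along the physical characteristic $x = y(x,n)+\xi n$ as in the proof of Theorem \ref{thm3.1} (now without the optical rescaling, since $\alpha_\nu$ depends on $\nu$) yields
\begin{equation*}
I_\nu(x,n) = g_\nu(n)e^{-\alpha_\nu s(x,n)} + \alpha_\nu\int_0^{s(x,n)} e^{-\alpha_\nu(s(x,n)-\xi)} B_\nu(T(y+\xi n))\,d\xi.
\end{equation*}
Decomposing $\mathcal{F}=S+\mathcal{U}[T]$ and running the change of variables $(\hat\xi,n)\mapsto \eta = x-\hat\xi n$ from Theorem \ref{thm3.1} inside $\mathcal{U}[T]$, I rewrite
\begin{equation*}
\mathcal{U}[T](x) = \int_\Omega \frac{x-\eta}{|x-\eta|^3}\left(\int_0^\infty \alpha_\nu e^{-\alpha_\nu|x-\eta|}B_\nu(T(\eta))\,d\nu\right)d\eta.
\end{equation*}
Taking the divergence in $x$, the delta contribution from $\nabla_x\cdot \tfrac{x-\eta}{|x-\eta|^3}=4\pi\delta(x-\eta)$ pulls out $4\pi f(T(x))$, while differentiating the exponential produces $-\alpha_\nu^2 e^{-\alpha_\nu |x-\eta|}/|x-\eta|^2$ inside the integral. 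Imposing $\nabla\cdot\mathcal{F}=0$ reduces the problem to $f(T(x)) = \mathcal{A}[T](x)+R(x)$, with $\mathcal{A}[T](x):=\frac{1}{4\pi}\int_\Omega \int_0^\infty \frac{\alpha_\nu^2 e^{-\alpha_\nu|x-\eta|}}{|x-\eta|^2}B_\nu(T(\eta))\,d\nu d\eta$ and $R(x):=-\frac{1}{4\pi}\nabla\cdot S(x)$. Using the identity $n\cdot\nabla_x s=1$ from Theorem \ref{thm3.1}, one finds $R(x) = \frac{1}{4\pi}\int_0^\infty d\nu\int_{\mathbb{S}^2}\alpha_\nu g_\nu(n) e^{-\alpha_\nu s(x,n)}dn > 0$; by Cauchy--Schwarz applied to $\int \alpha_\nu^2 g_\nu < \infty$ and $\int g_\nu < \infty$, the source $R$ lies in $L^\infty(\Omega)$.

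Since $f:[0,\infty)\to[0,\infty)$ is continuous and strictly increasing with $f(\infty)=\infty$, the problem is equivalent to the fixed-point equation $T=\mathcal{M}[T]:=f^{-1}(\mathcal{A}[T]+R)$, and $\mathcal{M}$ is order-preserving because every $B_\nu$ is strictly increasing. A Fubini computation shows that for any constant $T\equiv \bar T$,
\begin{equation*}
\mathcal{A}[\bar T](x) = \int_0^\infty \alpha_\nu B_\nu(\bar T)\,\theta_\nu(x)\,d\nu,\qquad \theta_\nu(x):=\frac{\alpha_\nu}{4\pi}\int_\Omega \frac{e^{-\alpha_\nu|x-\eta|}}{|x-\eta|^2}d\eta.
\end{equation*}
Since $\frac{\alpha_\nu}{4\pi}\int_{\mathbb{R}^3}e^{-\alpha_\nu r}/r^2\,d\eta = 1$ and $\Omega\subsetneq\mathbb{R}^3$, $\theta_\nu(x)<1$ strictly, so the defect $f(\bar T)-\mathcal{A}[\bar T](x)=\int_0^\infty \alpha_\nu B_\nu(\bar T)(1-\theta_\nu(x))d\nu$ is strictly positive. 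I would show this defect exceeds $\|R\|_\infty$ uniformly in $x$ once $\bar T$ is large enough by splitting the $\nu$-axis into a small-$\alpha_\nu$ band, where $1-\theta_\nu$ is uniformly bounded below and $\alpha_\nu B_\nu(\bar T)\to\infty$ as $\bar T\to\infty$, and a large-$\alpha_\nu$ tail controlled by $\int \alpha_\nu d\nu<\infty$. This produces a constant supersolution $\bar T$. Starting the monotone iteration from $T_0\equiv 0$, so that $T_1=f^{-1}(R)>0$, the iterates $T_n$ form an increasing sequence bounded by $\bar T$; dominated convergence in $\mathcal{A}$ then yields a fixed point $T^\star\in L^\infty_{\ge 0}(\Omega)$.

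For uniqueness, given two solutions $T_1,T_2$, I subtract the fixed-point relations, integrate over $\Omega$, and apply Fubini on the right to get
\begin{equation*}
\int_\Omega\int_0^\infty \alpha_\nu\bigl(1-\theta_\nu(\eta)\bigr)\bigl[B_\nu(T_1(\eta))-B_\nu(T_2(\eta))\bigr]d\nu d\eta = 0.
\end{equation*}
If $T_1\ge T_2$ pointwise, the integrand is nonnegative and $1-\theta_\nu>0$, forcing $T_1=T_2$. For arbitrary $T_1,T_2$, set $W:=\max(T_1,T_2)\in L^\infty$; order-preservation of $\mathcal{M}$ gives $\mathcal{M}[W]\ge W$, so $\mathcal{M}^n[W]$ is increasing, and since any $\bar T$ exceeding both $\|W\|_\infty$ and the supersolution threshold is itself a supersolution (the defect is increasing in $\bar T$), the sequence is bounded and converges to a solution $T^\star\ge T_1,T_2$. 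Applying the ordered-case uniqueness to $(T^\star,T_1)$ and $(T^\star,T_2)$ then yields $T_1=T_2$. The main obstacle in this program is the supersolution construction: since $\theta_\nu(x)\to 1$ as $\alpha_\nu\to\infty$ at interior points, the strict inequality $\theta_\nu<1$ is not uniform in $\nu$, and the required uniform-in-$x$ lower bound on the defect must be extracted from a careful splitting of the $\nu$-integral combined with the integrability assumption on $\alpha_\nu$.
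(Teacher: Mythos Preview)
Your reduction to the nonlocal integral equation $f(T(x))=\mathcal{A}[T](x)+R(x)$ is identical to the paper's, including the computation of $R>0$ via $n\cdot\nabla_x s=1$. From that point on, however, you and the paper diverge in both parts.

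\emph{Existence.} The paper applies the Schauder fixed-point theorem to the map $J[w]=\frac{1}{4\pi}\int_\Omega K(x-\eta,w(\eta))\,d\eta+R$ on a ball $\{0\le w\le L\}$: it shows $J$ maps this ball into itself (using $\int_\Omega K(x-\eta,L)\,d\eta\le 4\pi\theta_\Omega L$ with $\theta_\Omega<1$) and then proves equicontinuity of $J[w]$ in $x$ to get compactness via Arzel\`a--Ascoli, which is where the hypothesis $\alpha_\nu^2 g_\nu\in L^1$ is used. Your monotone-iteration scheme replaces compactness by the order structure: $\mathcal{M}=f^{-1}(\mathcal{A}[\cdot]+R)$ is order-preserving, $0$ is a subsolution, and a large constant $\bar T$ is a supersolution. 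The supersolution step is essentially the same inequality the paper needs for $J[w]\le L$, and your concern about the non-uniformity of $\theta_\nu<1$ is resolved by the uniform lower bound $1-\theta_\nu(x)\ge e^{-\alpha_\nu D}$ (with $D=\operatorname{diam}\Omega$), which gives a defect $\ge\int_0^\infty\alpha_\nu B_\nu(\bar T)e^{-\alpha_\nu D}\,d\nu\to\infty$ as $\bar T\to\infty$. Your route is more elementary (no compactness), while the paper's yields continuity of the solution as a byproduct.

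\emph{Uniqueness.} The paper linearizes the difference $w_1-w_2$ via the fundamental theorem of calculus, obtaining a kernel $\mathcal{L}(x,w)(\eta)$ with $\int_\Omega\mathcal{L}\,dx<1$, and asserts an $L^1$ contraction $\|w_1-w_2\|_{L^1}\le\theta\|w_1-w_2\|_{L^1}$ with $\theta<1$. Your argument---integrate the difference over $\Omega$, use Fubini to get $\int_\Omega\int_0^\infty\alpha_\nu(1-\theta_\nu(\eta))[B_\nu(T_1)-B_\nu(T_2)]\,d\nu\,d\eta=0$, conclude for ordered pairs, then sandwich general pairs between $T_i$ and the monotone limit of $\mathcal{M}^n[\max(T_1,T_2)]$---is a genuine alternative. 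It sidesteps the need for a uniform contraction constant and instead exploits only the strict positivity of $1-\theta_\nu$, at the cost of invoking the existence machinery once more to produce the dominating solution $T^\star$.
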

\begin{remark}
Note that $f$ (as well as $B_\nu$) is strictly monotone in $T$. Therefore, the uniqueness of $f(T)$ implies the uniqueness of $T$.
\end{remark}
\begin{proof}By modifying \eqref{eq.integro1} with the new type of coefficient $\alpha_\nu,$ we have
   \begin{equation}\label{eq.integro1new}
       I_\nu(x,n)=g_\nu(n)e^{-\alpha_\nu s(x,n)}+\alpha_\nu\int_0^{s(x,n)} e^{-\alpha_\nu(s(x,n)-\xi)} B_\nu(T(y(x,n)+\xi n))d\xi.
   \end{equation}
   Then, we have
   \begin{multline*}
     \mathcal{F}(x)
       =\int_0^\infty d\nu \int_{\mathbb{S}^2}dn \ ng_\nu(n)e^{-\alpha_\nu s(x,n)}
     \\  +\int_0^\infty d\nu\ \ \alpha_\nu \int_{\mathbb{S}^2}dn \ \int_0^{s(x,n)} e^{-\alpha_\nu(s(x,n)-\xi)} nB_\nu(T(y(x,n)+\xi n))d\xi\\
       \eqdef S(x)+\mathcal{U}[T](x).
   \end{multline*}Then by the \textit{non-local heat equation}  \eqref{heat equation}, we obtain
   $$\nabla_x
   \cdot ( S+\mathcal{U}[T])(x)=0.$$
   Now we compute $\nabla_x
   \cdot \mathcal{U}[T](x)$. By following similar changes of variables as in \eqref{UT1} - \eqref{UT2} with $w(x)\eqdef B_\nu(T(x))$ and an additional constant coefficient $\alpha_\nu$ in the exponent, we will write $\mathcal{U}[T](x)$ in a convolution-type form below. 

   We first put $w(x)\eqdef B_\nu(T(x))$ and observe that 
\begin{multline}\notag
        \mathcal{U}[T](x)=\int_0^\infty d\nu\ \ \alpha_\nu \int_{\mathbb{S}^2}n\left(\int_0^{s(x,n)} e^{-\alpha_\nu(s(x,n)-\xi)}w(y(x,n)+\xi n)d\xi\right)dn\\
        =\int_0^\infty d\nu\ \ \alpha_\nu \int_{\partial\Omega}dz\int_{\mathbb{S}^2}n\left(\int_0^{s(x,n)} e^{-\alpha_\nu(s(x,n)-\xi)}w(z+\xi n)\delta(z-y(x,n))d\xi\right)dn.
    \end{multline}
    We make a change of variables $\xi\mapsto \hat{\xi}=s(x,n)-\xi.$ Then we have\begin{multline*}
        \int_{\mathbb{S}^2}n\left(\int_0^{s(x,n)} e^{-\alpha_\nu(s(x,n)-\xi)}w(y(x,n)+\xi n)d\xi\right)dn\\
        =\int_{\partial\Omega}dz\int_{\mathbb{S}^2}n\left(\int_0^{s(x,n)} e^{-\alpha_\nu\hat{\xi}}w(z+(s-\hat{\xi}) n)\delta(z-y(x,n))d\hat{\xi}\right)dn.
    \end{multline*}
    Then we make another change of variables $(
    \hat{\xi},n)\mapsto \eta\eqdef x-\hat{\xi} n \in \Omega$. Since $x$ is independent of $\hat{\xi}$ and $n$, we obtain the Jacobian of the change of variables $J(\eta,n)\eqdef \left|\frac{\partial(\hat{\xi},n)}{\partial \eta}\right|=\frac{1}{|x-\eta|^2}$. Thus, for $n=n(x-\eta)=\frac{x-\eta}{|x-\eta|}$, we have
    \begin{multline}\notag
      \mathcal{U}[T](x)=\int_0^\infty d\nu\ \ \alpha_\nu \int_{\partial\Omega}dz\int_{\mathbb{S}^2}n\left(\int_0^{s(x,n)} e^{-\alpha_\nu\hat{\xi}}w(z+(s-\hat{\xi}) n)\delta(z-y(x,n))d\hat{\xi}\right)dn
\\   =\int_0^\infty d\nu\ \ \alpha_\nu \int_{\Omega}\frac{x-\eta}{|x-\eta|^3} e^{-\alpha_\nu|x-\eta|}w(\eta)d\eta,
    \end{multline}since $\hat{\xi}=(x-\eta)\cdot n= |x-\eta|.$
Note that $\mathcal{U}[T](x)$ is in a convolution form and the divergence in $x $ does not require any further regularity on $w$ and hence on $T$. 
  By taking the divergence on the convolution form, we conclude that
   \begin{multline*}
\nabla_x\cdot   \mathcal{U}[T](x)   
       =\int_0^\infty d\nu\ \ \alpha_\nu\int_\Omega d\eta\  B_\nu(T(\eta))\left(-\frac{\alpha_\nu}{|x-\eta|^2}e^{-\alpha_\nu |x-\eta|}+4\pi\delta(x-\eta)\right).
   \end{multline*}
   Thus we obtain the following equation:
\begin{multline} \label{eq for w}\nabla_x\cdot S -\int_0^\infty d\nu\ \ \int_\Omega d\eta\  B_\nu(T(\eta))\frac{\alpha_\nu^2}{|x-\eta|^2}e^{-\alpha_\nu |x-\eta|}\\+4\pi\int_0^\infty d\nu\ \alpha_\nu B_\nu(T(x))=0.\end{multline}
   Now define \begin{equation}\label{def.f.w}w(x)=f(T(x))\eqdef \int_0^\infty d\nu\ \alpha_\nu B_\nu(T(x)).\end{equation} By \eqref{blackbody} we observe that $w\ge0,$ $f$ is strictly monotone in $T$, and the inverse $f^{-1}$ exists so that 
   $T(x)=f^{-1}(w(x)).$  Now define the kernel \begin{equation}\label{def.K} K(x-\eta,w(\eta))=\tilde{K}(x-\eta, T(\eta))\eqdef \int_0^\infty d\nu \  B_\nu(T(\eta))\frac{\alpha_\nu^2}{|x-\eta|^2}e^{-\alpha_\nu |x-\eta|}.\end{equation} Then $\tilde{K}$ and $K$ are monotone in $T$ and $w$, respectively. Also note that
   \begin{equation} \label{w size}\int_\Omega dx\ K(x-y,w(y))<4\pi w(y),\end{equation} since $\Omega$ is bounded. 
 Note that, by \eqref{eq for w}, we have
   $$\nabla_x\cdot S = \int_\Omega d\eta\ K(x-\eta,w(\eta))-4\pi w(x).$$
   This is a second-type Fredholm non-linear integral equation. In order to discuss a Schauder-type fixed-point argument, we define the operator $J$ as
   $$J[w]\eqdef \frac{1}{4\pi}\int_\Omega d\eta\ K(x-\eta,w(\eta))-\frac{1}{4\pi}\nabla_x\cdot S.$$ We first show that $-\nabla_x\cdot S> 0$, since we observe \eqref{w size}. Since $n\cdot \nabla_xs=1$ by \eqref{partialsn}, the definition of $S$ yields that
   \begin{multline*}
       -\nabla_x\cdot S= -\nabla_x\cdot \int_0^\infty d\nu \int_{\mathbb{S}^2}dn \ ng_\nu(n)e^{-\alpha_\nu s(x,n)}
       \\=\int_0^\infty d\nu \int_{\mathbb{S}^2}dn \ \alpha_\nu g_\nu(n)(n\cdot  \nabla s)e^{-\alpha_\nu s(x,n)}\\=\int_0^\infty d\nu \int_{\mathbb{S}^2}dn \ \alpha_\nu g_\nu(n)e^{-\alpha_\nu s(x,n)}> 0.
   \end{multline*} Then this also yields that 
  $J:\{w\ge 0\}\to \{J[w]\ge 0\}$ and that $|\nabla_x\cdot S|<+\infty.$ Now suppose that $w\le L$ for some $L<0$ that is to be determined. Then we observe that
  \begin{multline*}
      J[w](x)=  \frac{1}{4\pi}\int_\Omega d\eta\ K(x-\eta,w(\eta))-\frac{1}{4\pi}\nabla_x\cdot S(x)\\
      \le  \frac{1}{4\pi}\int_\Omega d\eta\ K(x-\eta,L)+\frac{1}{4\pi}|\nabla_x\cdot S|
      \le \theta_\Omega L +\frac{1}{4\pi}\sup_{x\in \Omega}|\nabla_x\cdot S|,
  \end{multline*}for any $x\in \Omega$ where $\theta_\Omega\in(0,1)$ is a constant that depends only on $\Omega$ by \eqref{w size}. Choosing a sufficiently large $L>0$ such that  $$L>\frac{\sup_{x\in \Omega}|\nabla_x\cdot S|}{4\pi(1-\theta_\Omega)}, $$ we have $J[w]\le L. $ Thus we have that $J$ is a mapping from $$\mathcal{S}\eqdef \{w\in  L^\infty(\Omega): 0\le w(x)\le L,\text{ for any }x\in \Omega\},$$ to itself. In addition, $J$ is a compact mapping by the following argument below. In order to prove the compactness, we let $\{w_m\}_{m\in\mathbb{N}}$ be a bounded sequence in $\mathcal{S}$ with $\|w_m\|_{L^\infty}\le L.$ Recall the definition of the kernel $K$ in \eqref{def.K} and note that for every $\epsilon>0$ and $y\in [0,L]$ there exists a uniform $\delta\in (0,\epsilon)$ such that $|K(x,y)-K(x',y)|<\epsilon$ whenever $|x-x'|<\delta.$ Therefore, we have
  \begin{multline*}
      |J(w_m)(x)-J(w_m)(x')|\le \frac{1}{4\pi} \int_\Omega d\eta\ |K(x-\eta,w_m(\eta))-K(x'-\eta,w_m(\eta))|\\+\frac{1}{4\pi} \int_0^\infty d\nu \int_{\mathbb{S}^2}dn \ \alpha_\nu g_\nu(n)|e^{-\alpha_\nu s(x,n)}-e^{-\alpha_\nu s(x',n)}|\\
      \le \frac{\epsilon |\Omega|}{4\pi}+\frac{1}{4\pi}\int_0^\infty d\nu \int_{\mathbb{S}^2}dn \ \alpha_\nu g_\nu(n)|e^{-\alpha_\nu s(x,n)}-e^{-\alpha_\nu s(x',n)}|\\
      \le \frac{\epsilon |\Omega|}{4\pi}+\frac{1}{4\pi}\int_0^\infty d\nu \int_{\mathbb{S}^2}dn \ \alpha_\nu g_\nu(n)\int_0^1d\theta \ \alpha_\nu |e^{-\alpha_\nu s(x_{\theta},n) }||\nabla_x s(x_\theta,n)|  |x-y|\\
       \le \frac{\epsilon |\Omega|}{4\pi}+\frac{\epsilon C_{\Omega}}{4\pi}\int_0^\infty d\nu \int_{\mathbb{S}^2}dn \ \alpha_\nu^2 g_\nu(n),
  \end{multline*}for any $|x-x'|<\delta<\epsilon,$ where $x_{\theta}$ is defined as 
  $ x_{\theta}\eqdef (1-\theta_2)x+\theta_2 x',$ and we used the fundamental theorem of calculus and that $|\nabla_x s|$ is bounded by some constant $C_{\Omega}$ as the boundary $\partial \Omega$ is $C^1$.  
   Since $\alpha^2_\nu g_\nu(n)\in L^1([0,\infty)_\nu \times \mathbb{S}^2_n)$, this shows that $J(w_m)$ is an equicontinuous family of continuous functions on $\Omega$.  So there exists a subsequence $J(w_{m_k})$ that converges uniformly, and hence $J$ is compact because the image of a bounded sequence contains a convergent subsequence. Note that $\mathcal{S} $ is closed, bounded, convex, and non-empty. 
  Therefore, by the Schauder fixed-point theorem, $J$ has a fixed point and this completes the proof of the existence of a solution.
  
  For the proof of the uniqueness, we suppose that there are two solutions $w_1$ and $w_2$ in $ L^\infty(\Omega)$. 
  Then by taking the difference, we have
  $$w_1(x)-w_2(x)=\frac{1}{4\pi} \int_\Omega d\eta\ [K(x-\eta,w_1(\eta))-K(x-\eta,w_2(\eta))].$$
  Define a function 
  $$\Phi(t)\eqdef K(x-\eta,tw_2(\eta)+(1-t)w_1(\eta)),$$ such that $\Phi(0)=K(x-\eta,w_1(\eta))$ and $\Phi(1)=K(x-\eta,w_2(\eta))$. Then we observe that
  \begin{multline*}
      \Phi(1)-\Phi(0)=\int_0^1 \Phi'(t) dt = \int_0^1 \frac{\partial K}{\partial w}(x-\eta,tw_2(\eta)-(1-t)w_1(\eta))\cdot (w_2-w_1)(\eta)dt.
  \end{multline*}
   Thus, we have
   \begin{multline*}
       w_1(x)-w_2(x)=\int_\Omega (w_2-w_1)(\eta)d\eta\int_0^1 \frac{\partial K}{\partial w}(x-\eta,tw_2(\eta)-(1-t)w_1(\eta))\\
       =\frac{1}{4\pi }\int_\Omega d\eta (w_2-w_1)(\eta)\int_0^1 dt \int_0^\infty d\nu \frac{\partial B_\nu}{\partial T}\frac{\partial (f^{-1}(w))}{\partial w}(\eta) \alpha_\nu^2\frac{\exp(-\alpha_\nu|x-\eta|)}{|x-\eta|^2},
   \end{multline*}
   by the definition of $K$. Define 
   $$\mathcal{L}(x,w)(\eta)\eqdef \frac{1}{4\pi }\int_0^\infty d\nu \frac{\partial B_\nu}{\partial T}\frac{\partial (f^{-1}(w))}{\partial w}(\eta) \alpha_\nu^2\frac{\exp(-\alpha_\nu|x-\eta|)}{|x-\eta|^2}.$$
  By \eqref{blackbody}, we first have 
  $\frac{\partial B_\nu}{\partial T}\ge 0.$ Also, since $f^{-1}$ is monotone in $w$, we have $\frac{\partial (f^{-1}(w))}{\partial w}\ge 0.$ Therefore, $\mathcal{L}(x,w)(\eta)\ge 0. $
In addition, we have 
\begin{multline*}\int_\Omega \mathcal{L}(x,w)(\eta)dx< \int_{\mathbb{R}^3} \mathcal{L}(x,w)(\eta)dx \\
= \frac{1}{4\pi }\int_0^\infty d\nu  \ \frac{\partial B_\nu}{\partial T}\frac{\partial (f^{-1}(w))}{\partial w}(\eta) \alpha_\nu^2\int_{\mathbb{R}^3} dx\ \frac{\exp(-\alpha_\nu|x-\eta|)}{|x-\eta|^2}\\
=\int_0^\infty d\nu  \ \frac{\partial B_\nu}{\partial T}\frac{\partial (f^{-1}(w))}{\partial w}(\eta) \alpha_\nu,
\end{multline*}since 
$$\frac{\alpha_\nu}{4\pi}\int_{\mathbb{R}^3} dx\ \frac{\exp(-\alpha_\nu|x-\eta|)}{|x-\eta|^2}=1.$$
 On the other hand, by the inverse function theorem, we have 
   $$ \frac{\partial (f^{-1}(w))}{\partial w}= \frac{1}{\frac{\partial (f(T))}{\partial T}}.$$ Recall \eqref{def.f.w} that 
  $$\frac{\partial (f(T))}{\partial T}= \int_0^\infty d\nu\ \ \alpha_\nu \frac{\partial B_\nu}{\partial T},$$ and hence we have $$\int_\Omega \mathcal{L}(x,w)(\eta)dx< \int_0^\infty d\nu  \ \frac{\partial B_\nu}{\partial T}\frac{\partial (f^{-1}(w))}{\partial w}(\eta) \alpha_\nu=1.
$$
  Therefore, we finally obtain that 
   \begin{multline*}
       \| w_1-w_2\|_{L^1(\Omega)}=\int_0^1 dt\int_\Omega d\eta \ |w_2(\eta)-w_1(\eta)| \int_\Omega dx\ \mathcal{L}(x,w)(\eta)\\
       \le \theta \| w_1-w_2\|_{L^1(\Omega)},
   \end{multline*} for some constant $\theta <1$. Here note that $\Omega$ is bounded and hence $L^\infty(\Omega)\subset L^1(\Omega).$ Therefore,  $w_1=w_2$ and we conclude that the solution is unique.   \end{proof}

This completes the proof of the well-posedness in the case when the emission-absorption coefficient depends on the frequency. In the following sections, we introduce the radiation entropy production and  another approach to provide a uniqueness criterion for a generalized problem via the variational principle.

 \section{Radiation entropy production}\label{sec.entropy}
 In this section, we define the entropy density of the radiation and introduce the entropy production in the case without scattering. This will be used later in Section \ref{sec.uniqueness variation} for the proof of the uniqueness of stationary solutions.
 
We first rewrite the stationary radiative transfer equation without scattering as follows:
 \begin{equation}
     \label{timeindep.rte}
     n\cdot \nabla I_\nu = \varepsilon_\nu -\kappa_\nu I_\nu= \kappa_\nu(B_\nu(T)-I_\nu(T)),
 \end{equation}where $\kappa_\nu$ is the absorption coefficient and the source is given by the Planck distribution 
\begin{equation}\label{Planck}\frac{\varepsilon_\nu}{\kappa_\nu}=B_\nu(T)=\frac{2h\nu^3}{c^2}\frac{1}{e^{\frac{h\nu}{kT}}-1}.\end{equation} Notice that $\kappa_\nu$ can also depend on $x$ and we assume that $\kappa_\nu>0.$ Here let us define an analogous term $T_\nu(x,n)$ of the local temperature along each direction $n$ such that
 \begin{equation}
     \label{InuasBnuTnu}I_\nu(x,n)= B_\nu(T_\nu(x,n)),
 \end{equation} by means of the inverse of the function $B_\nu.$ This $T_\nu$ physically means the local temperature at which the radiation intensity $I_\nu$ can be written as the Planck distribution $B_\nu(T_\nu)$. Note that 
 \begin{equation}\label{Tnu as I nu}\frac{1}{T_\nu(x,n)}=\frac{k}{h\nu}\log\bigg(1+\frac{2h\nu^3}{c^2I_\nu(x,n)}\bigg),\end{equation} where $k$ is the Boltzmann constant.
 Then the entropy density of the radiation $s_{rad}=s_{rad}(x)$ is given by (see \cite[pp. 133, (5.1.35)]{oxenius}) 
 \begin{equation}
     \label{ent.den}
     s_{rad}(x) = \int_{\mathbb{S}^2}dn \int_0^\infty d\nu\ \ s_\nu (x,n),
 \end{equation}
where the local entropy density along each direction $n $ is defined as\begin{multline}
     \label{ent.den2}
     s_\nu (x,n) = k\frac{2\nu^2}{c^3}\bigg[\bigg(1+\frac{c^2}{2h\nu^3}I_\nu(x,n)\bigg)\log\bigg(1+\frac{c^2}{2h\nu^3}I_\nu(x,n)\bigg)\\-\frac{c^2}{2h\nu^3}I_\nu(x,n)\log\bigg(\frac{c^2}{2h\nu^3}I_\nu(x,n)\bigg) \bigg].
 \end{multline}Then by taking the gradient of $s_\nu$, we first obtain that 
 \begin{multline*}
     \nabla_x   s_\nu (x,n)=  k\frac{2\nu^2}{c^3}\nabla_x   \bigg[\bigg(1+\frac{c^2}{2h\nu^3}I_\nu(x,n)\bigg)\log\bigg(1+\frac{c^2}{2h\nu^3}I_\nu(x,n)\bigg)\\-\frac{c^2}{2h\nu^3}I_\nu(x,n)\log\bigg(\frac{c^2}{2h\nu^3}I_\nu(x,n)\bigg) \bigg]\\
     =k\frac{2\nu^2}{c^3} \bigg[\frac{c^2}{2h\nu^3}\nabla_x  I_\nu(x,n) +\frac{c^2}{2h\nu^3}\nabla_x  I_\nu(x,n) \log\bigg(1+\frac{c^2}{2h\nu^3}I_\nu(x,n)\bigg)\\-\frac{c^2}{2h\nu^3}\nabla_x  I_\nu(x,n) -\frac{c^2}{2h\nu^3}\nabla_x  I_\nu(x,n) \log\bigg(\frac{c^2}{2h\nu^3}I_\nu(x,n)\bigg)\bigg]\\
     =\frac{k}{h\nu c} \nabla_x  I_\nu(x,n) \log\bigg(\frac{1+\frac{c^2}{2h\nu^3}I_\nu(x,n)}{\frac{c^2}{2h\nu^3}I_\nu(x,n)}\bigg)\\
     =\frac{k}{h\nu c} \nabla_x  I_\nu(x,n) \log\bigg(1+ \frac{2h\nu^3}{c^2I_\nu(x,n)}\bigg)=\frac{1}{cT_\nu}\nabla_xI_\nu(x,n).
 \end{multline*}Then by plugging this into \eqref{timeindep.rte}, we can obtain that $s_\nu$ satisfies the entropy equation
 $$\nabla_x\cdot h_\nu = \hat{\sigma}_\nu,$$ where 
 $$h_\nu \eqdef cn s_\nu, \text{ and }
 \hat{\sigma}_\nu\eqdef \frac{1}{T_\nu(x,n)}[\varepsilon_\nu -\kappa_\nu I_\nu(x,n)]=\frac{\kappa_\nu }{T_\nu(x,n)}[B_\nu(T)-B_\nu(T_\nu)].$$
Here $h_\nu$ stands for the entropy flow density. 
 
 \subsection{Entropy production formula} Now we claim that the total entropy is being produced. More precisely, we introduce the following proposition on entropy production: 
 \begin{proposition}\label{entropy production proposition}Let $\Omega \subset \mathbb{R}^3$ be bounded. Assume that $\nabla_x\cdot\mathcal{F}(x)=0 $ at any $x\in \Omega,$ where $\mathcal{F}$ is the flux of radiation energy at $x$ defined in \eqref{flux.radiation.energy}.
Then we have \begin{multline*}
    \int_0^\infty d\nu\ \int_\Omega dx \int _{\mathbb{S}^2}dn\ \nabla_x\cdot h_\nu\\=\int_0^\infty d\nu\ \int_\Omega dx \int _{\mathbb{S}^2}dn\  \kappa_\nu\left(\frac{1 }{T_\nu(x,n)}-\frac{1 }{T(x)}\right)[B_\nu(T)-B_\nu(T_\nu)]\ge 0.
\end{multline*}  The equality holds if and only if $T_\nu(x,n)=T(x)$ for $\nu>0$, $x\in \Omega ,$ and $n\in\mathbb{S}^2$ a.e. 
 \end{proposition}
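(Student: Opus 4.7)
The plan is to start from the pointwise entropy identity $\nabla_x\cdot h_\nu=\hat\sigma_\nu$ that was already derived just above the statement, namely
\[
\nabla_x\cdot h_\nu=\frac{\kappa_\nu}{T_\nu(x,n)}\bigl[B_\nu(T)-B_\nu(T_\nu)\bigr],
\]
integrate this over $\nu\in(0,\infty)$, $x\in\Omega$, $n\in\mathbb S^2$, and then produce the symmetric difference $\tfrac{1}{T_\nu}-\tfrac{1}{T}$ by adding and subtracting the term $\tfrac{\kappa_\nu}{T(x)}[B_\nu(T)-B_\nu(T_\nu)]$. The whole content of the proposition is then (i) that the subtracted piece vanishes, which is where the hypothesis $\nabla_x\cdot\mathcal F=0$ enters, and (ii) a pointwise sign check.

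For step (i), I would observe that $T(x)$ does not depend on $n$, so
\[
\int_0^\infty\!d\nu\int_\Omega\!dx\int_{\mathbb S^2}\!dn\,\frac{\kappa_\nu}{T(x)}[B_\nu(T)-B_\nu(T_\nu)]=\int_\Omega\frac{dx}{T(x)}\int_0^\infty\!d\nu\int_{\mathbb S^2}\!dn\,\kappa_\nu\bigl[B_\nu(T)-I_\nu\bigr],
\]
where I used \eqref{InuasBnuTnu} to replace $B_\nu(T_\nu)$ by $I_\nu$. The stationary radiative transfer equation \eqref{timeindep.rte} rewrites the inner integrand as $n\cdot\nabla_x I_\nu$, so the inner double integral equals $\nabla_x\cdot\mathcal F(x)$, which is zero by hypothesis. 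Hence the subtracted term is identically zero, and combining gives
\[
\int_0^\infty\!d\nu\int_\Omega\!dx\int_{\mathbb S^2}\!dn\,\nabla_x\cdot h_\nu=\int_0^\infty\!d\nu\int_\Omega\!dx\int_{\mathbb S^2}\!dn\,\kappa_\nu\!\left(\frac{1}{T_\nu(x,n)}-\frac{1}{T(x)}\right)\!\bigl[B_\nu(T)-B_\nu(T_\nu)\bigr].
\]

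For step (ii), the pointwise sign follows from monotonicity of $B_\nu(\cdot)$: since $T\mapsto B_\nu(T)$ is strictly increasing on $(0,\infty)$, the two factors $\tfrac{1}{T_\nu}-\tfrac{1}{T}$ and $B_\nu(T)-B_\nu(T_\nu)$ have opposite signs with respect to the relative ordering of $T$ and $T_\nu$ (the first is positive when $T_\nu<T$, the second is positive when $T_\nu<T$; similarly when $T_\nu>T$), so their product is nonnegative and vanishes exactly when $T_\nu(x,n)=T(x)$. Since $\kappa_\nu>0$, the integrand is a nonnegative measurable function, and the equality case in the full integral is the a.e.\ statement asserted.

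The argument is essentially the standard H-theorem-type trick, and there is no real analytical obstacle: the only thing to be careful with is checking integrability so that the interchange of integrals and the addition/subtraction are legitimate. The one substantive point worth emphasizing is that it is precisely the divergence-free-flux condition $\nabla_x\cdot\mathcal F=0$ (the non-local heat equation) that makes the reference temperature $T(x)$ drop out; without this hypothesis one would be left with an extra term $\int_\Omega T^{-1}\nabla_x\cdot\mathcal F\,dx$ obstructing the sign of the entropy production.
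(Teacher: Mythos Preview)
Your proposal is correct and follows essentially the same route as the paper: add and subtract $\kappa_\nu/T(x)$ inside the integral, use $\nabla_x\cdot\mathcal F=0$ (equivalently, the integrated radiative transfer equation) to kill the extra term, and check the sign pointwise. One small slip in wording: you say the two factors ``have opposite signs,'' but your own parenthetical shows they have the \emph{same} sign (both positive when $T_\nu<T$, both negative when $T_\nu>T$), which is exactly what makes the product nonnegative; the paper verifies this by writing out the Planck formula explicitly in the variables $X=1/T$, $X_\nu=1/T_\nu$.
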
\begin{remark}
 This proposition tells us that the entropy is being produced inside the body by the divergence theorem:
 $$ \int_0^\infty d\nu \int _{\mathbb{S}^2}dn\int_{\partial \Omega} h_\nu \cdot n_xdS_x \ge 0,$$ where $n_x$ is the outward normal vector at $x\in \partial \Omega.$ Namely, this implies that the total outgoing flow of the entropy leaving from the body is greater than or equal to the total incoming entropy into the body:\begin{multline*}
     \int_0^\infty d\nu \int _{\mathbb{S}^2\cap \{n\cdot n_x <0\}}dn\int_{\partial \Omega} h_\nu \cdot (-n_x)dS_x\le \int_0^\infty d\nu \int _{\mathbb{S}^2\cap \{n\cdot n_x >0\}}dn\int_{\partial \Omega} h_\nu \cdot n_xdS_x.
 \end{multline*}
 \end{remark}
 \begin{proof}[Proof of Proposition \ref{entropy production proposition}]
   By integrating \eqref{timeindep.rte} with respect to $n$ and using the divergence free flow assumption on $\mathcal{F}$, we have \begin{equation}\label{time dep. heat}
       \nabla_x\cdot \mathcal{F} =\int_0^\infty d\nu \int_{\mathbb{S}^2}dn\ \kappa_\nu [B_\nu(T)-B_\nu(T_\nu)]=0,
   \end{equation} for all $x\in \Omega$. 
   Then due to \eqref{time dep. heat} we observe that \begin{multline}\label{ent.eq1}
     \int_0^\infty d\nu\ \int_\Omega dx \int _{\mathbb{S}^2}dn\  \frac{\kappa_\nu }{T_\nu(x,n)}[B_\nu(T)-B_\nu(T_\nu)]\\
     = \int_0^\infty d\nu\ \int_\Omega dx \int _{\mathbb{S}^2}dn\  \kappa_\nu\bigg(\frac{1 }{T_\nu(x,n)}-\frac{1 }{T(x)}\bigg)[B_\nu(T)-B_\nu(T_\nu)]\\
     +\int_\Omega dx \frac{1 }{T(x)}\int_0^\infty d\nu \int _{\mathbb{S}^2}dn\  \kappa_\nu[B_\nu(T)-B_\nu(T_\nu)]\\
     =\int_0^\infty d\nu\ \int_\Omega dx \int _{\mathbb{S}^2}dn\  \kappa_\nu\bigg(\frac{1 }{T_\nu(x,n)}-\frac{1 }{T(x)}\bigg)[B_\nu(T)-B_\nu(T_\nu)].
   \end{multline}
   Then we note that the integrand is always non-negative, since \begin{multline}\label{ent.eq2}
       \bigg(\frac{1 }{T_\nu(x,n)}-\frac{1 }{T(x)}\bigg)[B_\nu(T)-B_\nu(T_\nu)]\\
       =\frac{2h_\nu^3}{c^2}(X_\nu-X)\left(\frac{1}{e^{h\nu X/k}-1}-\frac{1}{e^{h\nu X_\nu/k}-1}\right)\\
       =\frac{2h_\nu^3}{c^2}\frac{1}{(e^{h\nu X/k}-1)(e^{h\nu X}-1)}(X_\nu-X)\left(e^{h\nu X_\nu/k}-e^{h\nu X/k}\right)
       \ge 0,
   \end{multline}
   where we denote $X_\nu=\frac{1}{T_\nu}$ and $X=\frac{1}{T}.$ The equality holds if and only if $T_\nu(x,n)=T(x)$ for $\nu>0$, $x\in \Omega, $ and $n\in\mathbb{S}^2$ a.e.  This completes the proof of the entropy production.
 \end{proof}
As a corollary, we further obtain the local temperature distribution $T(x)$ is indeed constant everywhere if the equality holds. More precisely, we have the following proposition:\begin{proposition}\label{stationary entropy production proposition}
Let $\Omega\subset \mathbb{R}^3$ be bounded. Assume that $\nabla_x\cdot\mathcal{F}(x)=0 $ at any $x\in \Omega,$ where $\mathcal{F}$ is the flux of radiation energy at $x$ defined in \eqref{flux.radiation.energy}. In the stationary case, we have that the entropy is being produced; i.e., the total entropy outgoing flow is greater than or equal to the incoming entropy flow at the boundary on $\partial \Omega$: \begin{equation}\label{stationary entropy ineq}\int_0^\infty d\nu \int _{\mathbb{S}^2}dn\int_{\partial \Omega} dS_x \ n_x\cdot h_\nu \ge 0,\end{equation}where $n_x$ is the outward normal vector at the boundary point $x \in \partial \Omega$. Furthermore, if the equality holds in \eqref{stationary entropy ineq}, then the local temperature distribution $T(x)$ is constant everywhere.    
 \end{proposition}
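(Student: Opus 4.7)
The plan is to deduce Proposition \ref{stationary entropy production proposition} directly from Proposition \ref{entropy production proposition} by the divergence theorem, and then to exploit the pointwise a.e. equality condition provided there together with the stationary transfer equation \eqref{timeindep.rte}.

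First, I would integrate the entropy equation $\nabla_x\cdot h_\nu=\hat\sigma_\nu$ over $\Omega$, $\mathbb{S}^2$, and $(0,\infty)_\nu$. On the left, since $\partial\Omega$ is $C^1$, the divergence theorem gives
\begin{equation*}
\int_0^\infty d\nu\int_{\mathbb{S}^2}dn\int_\Omega \nabla_x\cdot h_\nu\, dx=\int_0^\infty d\nu\int_{\mathbb{S}^2}dn\int_{\partial\Omega}n_x\cdot h_\nu\, dS_x.
\end{equation*}
On the right, by Proposition \ref{entropy production proposition} (and the computation in \eqref{ent.eq1}--\eqref{ent.eq2}) the integrated entropy production equals
\begin{equation*}
\int_0^\infty d\nu\int_\Omega dx\int_{\mathbb{S}^2}dn\,\kappa_\nu\Bigl(\tfrac{1}{T_\nu(x,n)}-\tfrac{1}{T(x)}\Bigr)[B_\nu(T)-B_\nu(T_\nu)]\ge 0,
\end{equation*}
which yields \eqref{stationary entropy ineq} immediately.

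For the rigidity part, suppose equality holds in \eqref{stationary entropy ineq}. Then the integrated entropy production vanishes; since the integrand in \eqref{ent.eq2} is pointwise non-negative (using that $x\mapsto e^{h\nu x/k}-1$ is strictly monotone in $x=1/T$), the equality case of Proposition \ref{entropy production proposition} forces $T_\nu(x,n)=T(x)$ for a.e.\ $(\nu,x,n)$. By the definition \eqref{InuasBnuTnu}, this means $I_\nu(x,n)=B_\nu(T(x))$ a.e., so $I_\nu$ is isotropic as a function of $n$.

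The key closing step is then to feed this isotropy back into the stationary transfer equation \eqref{timeindep.rte}: the right-hand side $\kappa_\nu(B_\nu(T)-I_\nu)$ vanishes a.e., and therefore $n\cdot\nabla_x B_\nu(T(x))=0$ for a.e.\ $n\in\mathbb{S}^2$ and $x\in\Omega$. Since this must hold for a full-measure set of directions $n$, it forces $\nabla_x B_\nu(T(x))=0$ in $\Omega$; as $B_\nu$ is strictly monotone in $T$ for each fixed $\nu>0$, this gives $\nabla_x T(x)=0$, i.e.\ $T$ is constant on $\Omega$. The main subtlety I expect is making the last step rigorous at the level of distributional derivatives when only $L^\infty$ or $L^1$ regularity on $T$ is available; here one argues that $B_\nu(T)$ is a distribution on $\Omega$ whose directional derivative along every $n$ in a full-measure subset of $\mathbb{S}^2$ vanishes, so its full gradient vanishes and $B_\nu(T)$ (and hence $T$) is constant on the connected domain $\Omega$.
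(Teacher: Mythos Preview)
Your proposal is correct and follows essentially the same approach as the paper: both deduce \eqref{stationary entropy ineq} from Proposition~\ref{entropy production proposition} via the divergence theorem, use the equality case there to get $T_\nu(x,n)=T(x)$ a.e., and then insert $I_\nu=B_\nu(T(x))$ into \eqref{timeindep.rte} to conclude $n\cdot\nabla_x B_\nu(T)=0$ and hence $T$ is constant. The paper first treats the $C^1$ case via the chain rule $B_\nu'(T)\,n\cdot\nabla_x T=0$ and then passes to the $L^4$/$L^\infty$ setting by interpreting the same computation distributionally, which is exactly the subtlety you anticipated.
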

 \begin{proof}
   The proof of \eqref{stationary entropy ineq} is a direct consequence of Proposition \ref{entropy production proposition}. Also, by \eqref{ent.eq1} and \eqref{ent.eq2}, the equality holds if and only if $T_\nu(x,n)=T(x) $ for $\nu>0$, $x\in \Omega, $ and $n\in\mathbb{S}^2$ a.e.  In this case, firstly for a $C^1$ solution $T(x)$ (and hence for $T_\nu$ and $I_\nu$ also in $C^1$), \eqref{timeindep.rte} and \eqref{InuasBnuTnu} imply for $\nu>0$, $x\in \Omega, $ and $n\in\mathbb{S}^2$ a.e. that\begin{multline*}
     0=  n\cdot \nabla_xI_\nu(x,n)=n\cdot \nabla_x B_\nu(T_\nu(x,n)) = n\cdot \nabla_x B_\nu(T(x))  \\= B'_\nu(T(x))(n\cdot \nabla_xT)(x).
   \end{multline*} Since $B'_\nu(T(x))\ne 0,$ we conclude that $\nabla_x T=0$ for $x\in\Omega$, and $T(x)=T_0$ for some constant temperature distribution $T_0.$ Now, if $T$ is just in $L^4$ or $L^\infty$ as in the existence theorems (Theorems \ref{thm3.1} and \ref{them 3.2}), we follow the same argument in the sense of distributions by multiplying compactly supported smooth test functions on both sides. 
   Then $T$ is a constant distribution and $T(x)=T_0$ for some constant temperature distribution $T_0.$ This completes the proof.  
 \end{proof}Finally, we conclude this section by stating a corollary as a direct consequence:
\begin{corollary}\label{cor.ent.prod.zero}Let $\Omega \subset \mathbb{R}^3$ be bounded. Assume that $\nabla_x\cdot\mathcal{F}(x)=0 $ at any $x\in \Omega,$ where $\mathcal{F}$ is the flux of radiation energy at $x$ defined in \eqref{flux.radiation.energy}. 
Suppose that the incoming radiation at the boundary is at equilibrium; i.e., suppose that $I_\nu(x,n)=B_\nu(T_0)$ for $x\in \partial\Omega$ and $n\cdot n_x<0$ where $n_x$ is the outward normal vector at the boundary point $x\in \partial\Omega$. Then there is a constant solution $T(x)=T_0$ minimizing the entropy and $I_\nu(x,n)=B_\nu(T_0)$ for $x\in\Omega$. The entropy production is zero if and only if the local temperature distribution $T(x)$ of the body $\Omega$ is equal to the constant $T_0$.  
\end{corollary}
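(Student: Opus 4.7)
The plan is to split the corollary into two claims: first, that the configuration $T(x)\equiv T_0$ with $I_\nu(x,n)\equiv B_\nu(T_0)$ is a bona fide solution of the coupled system with vanishing entropy production, and second, that any solution with vanishing entropy production must coincide with it. For the first assertion I would simply substitute the candidate into \eqref{timeindep.rte}: both sides are zero since $\nabla_x I_\nu=0$ and $B_\nu(T)-I_\nu=B_\nu(T_0)-B_\nu(T_0)=0$. The divergence condition is automatic because $\mathcal{F}(x)=\int_0^\infty d\nu\int_{\mathbb{S}^2}n\,B_\nu(T_0)\,dn=0$ (the inner integral vanishes since $\int_{\mathbb{S}^2}n\,dn=0$), and the boundary datum $B_\nu(T_0)$ is matched by construction. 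Since $I_\nu\equiv B_\nu(T_0)$ gives $T_\nu\equiv T_0\equiv T$ by inversion of $B_\nu$, the integrand in Proposition \ref{entropy production proposition} vanishes pointwise, so the entropy production is zero; combined with the pointwise non-negativity established there, this shows the constant configuration minimizes the entropy production over all admissible solutions.

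For the nontrivial direction of the ``if and only if'', I would assume the entropy production vanishes and invoke the equality cases of Propositions \ref{entropy production proposition} and \ref{stationary entropy production proposition}. These together force $T_\nu(x,n)=T(x)$ a.e.\ and $T\equiv T_c$ constant on $\Omega$. It remains only to identify $T_c$ with the boundary temperature $T_0$. To do so, fix any $x_0\in\partial\Omega$ and any $n_0$ with $n_0\cdot n_{x_0}<0$, so that the half-ray $x(s)=x_0+sn_0$ enters $\Omega$ for small $s>0$. Since $T\equiv T_c$ along this ray, integrating \eqref{timeindep.rte} exactly as in the derivation of \eqref{eq.integro1new} yields
\begin{equation*}
I_\nu(x(s),n_0)=B_\nu(T_0)\,e^{-\tau_\nu(s)}+B_\nu(T_c)\bigl(1-e^{-\tau_\nu(s)}\bigr),\qquad \tau_\nu(s)\eqdef\int_0^s\kappa_\nu(x(\sigma))\,d\sigma.
\end{equation*}
On the other hand, the equality case also gives $I_\nu(x(s),n_0)=B_\nu(T_\nu(x(s),n_0))=B_\nu(T_c)$. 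Subtracting the two expressions produces $(B_\nu(T_0)-B_\nu(T_c))\,e^{-\tau_\nu(s)}=0$, and since $\kappa_\nu>0$ makes $e^{-\tau_\nu(s)}>0$ and $B_\nu(\cdot)$ is strictly monotone in $T$ (as noted just after \eqref{blackbody}), we conclude $T_c=T_0$, which completes the proof.

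The only genuinely new step beyond what is already developed in Section \ref{sec.entropy} is this final boundary matching, and the only subtlety is that the existence theorems produce $T$ only in $L^\infty$ or $L^4$, so the characteristic formula must be interpreted in the weak sense. This is handled exactly as in the last paragraph of the proof of Proposition \ref{stationary entropy production proposition} by testing against compactly supported smooth functions along the ray, so no machinery beyond what has already been used in Propositions \ref{entropy production proposition} and \ref{stationary entropy production proposition} is required, and this is where I expect the main (mild) obstacle to lie.
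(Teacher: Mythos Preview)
Your proof is correct. The paper gives no explicit proof of this corollary, merely labeling it ``a direct consequence'' of Propositions \ref{entropy production proposition} and \ref{stationary entropy production proposition}, so your argument supplies exactly the details the paper omits. The one place where you work slightly harder than necessary is the boundary-matching step: once Proposition \ref{stationary entropy production proposition} has given you $T_\nu(x,n)=T(x)\equiv T_c$, its own proof already shows $n\cdot\nabla_x I_\nu=0$ pointwise (or distributionally), so $I_\nu$ is constant along every ray; propagating the incoming boundary value $B_\nu(T_0)$ inward immediately gives $I_\nu\equiv B_\nu(T_0)$ and hence $T_c=T_0$, without invoking the full Duhamel formula \eqref{eq.integro1new}. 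Your route via Duhamel is of course equivalent and handles the $x$-dependence of $\kappa_\nu$ cleanly, so this is a stylistic rather than a substantive difference.
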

Notice that in this corollary, we do not claim that the constant solution is unique. We will further discuss the uniqueness of constant solutions in the next section.

  \section{Uniqueness of the local temperature using a variational principle}
 \label{sec.uniqueness variation} In this section, we provide the proof of the uniqueness of the local temperature solution via looking at the entropy formula and a corresponding variational principle associated to the entropy. Throughout this section, we study the uniqueness of constant solutions to  \eqref{timeindep.rte} coupled with \eqref{heat equation} that maximizes the entropy in the stationary case, given that the incoming radiation is constant at the boundary $\partial \Omega$.

   We first recall \eqref{stationary entropy ineq} and denote the total outgoing entropy through the boundary $\partial \Omega$ as $\Phi_+$ where $\Phi_+$ is defined as
  \begin{equation}\label{totaloutgoingentropy}\Phi_+= \int_0^\infty d\nu  \int_{\partial \Omega}dS_x\int _{\{n\cdot n_x > 0\}\cap\mathbb{S}^2}dn(n\cdot n_x) \ cs_\nu.\end{equation} Also, denote the total outgoing radiation through the boundary $\partial \Omega$ as $i_{out}$ where $i_{out}$ is defined as
  \begin{equation}\label{totaloutgoingradiation}i_{out}= \int_0^\infty d\nu  \int_{\partial \Omega}dS_x\int _{\{n\cdot n_x > 0\}\cap\mathbb{S}^2}dn(n\cdot n_x) \ I_\nu.\end{equation} Similarly, we can define total incoming entropy through the boundary $\partial \Omega$ as $\Phi_-$ as
  \begin{equation}\label{totalincomingentropy}\Phi_-= \int_0^\infty d\nu  \int_{\partial \Omega}dS_x\int _{\{n\cdot n_x < 0\}\cap\mathbb{S}^2}dn(n\cdot n_x) \ cs_\nu,\end{equation} and the total incoming radiation  through the boundary $\partial \Omega$  $i_{in}$ as
  \begin{equation}\label{totalincomingradiation}i_{in}= \int_0^\infty d\nu  \int_{\partial \Omega}dS_x\int _{\{n\cdot n_x < 0\}\cap\mathbb{S}^2}dn(n\cdot n_x) \ I_\nu.\end{equation} Then we have the following lemma on the maximum outgoing entropy:
  \begin{lemma}\label{lemma.outent}
  Suppose that the total outgoing radiation $i_{out}>0.$ Then the total outgoing entropy $\Phi_+$ is at its   maximum if and only if $T_\nu(x,n)=T_0$ for some constant temperature distribution $T_0$ at the boundary $x\in \partial\Omega$ and $n\in\mathbb{S}^2$.
  \end{lemma}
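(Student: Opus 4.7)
The plan is to treat this as a constrained maximization on the outgoing boundary data $\{I_\nu(x,n):x\in\partial\Omega,\ n\cdot n_x>0,\ \nu>0\}$, fixing the linear constraint $i_{out}=\text{const}>0$ and maximizing the functional $\Phi_+$. I would introduce a Lagrange multiplier $\lambda\in\mathbb{R}$ and consider
\[
\mathcal{J}[I_\nu]\eqdef\int_0^\infty d\nu\int_{\partial\Omega}dS_x\int_{\{n\cdot n_x>0\}\cap\mathbb{S}^2}dn\,(n\cdot n_x)\bigl[c\,s_\nu(x,n)-\lambda I_\nu(x,n)\bigr].
\]

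The first key step is the pointwise identity $\partial s_\nu/\partial I_\nu=1/(c\,T_\nu)$, which follows from differentiating \eqref{ent.den2} exactly as in the computation of $\nabla_x s_\nu$ in Section \ref{sec.entropy} and then invoking \eqref{Tnu as I nu}. Since $(n\cdot n_x)>0$ on the domain of integration, the Euler--Lagrange equation
\[
(n\cdot n_x)\Bigl[c\,\tfrac{\partial s_\nu}{\partial I_\nu}-\lambda\Bigr]=0
\]
collapses pointwise to $T_\nu(x,n)=1/\lambda$, i.e., to a single constant $T_0$ independent of $x$, $n$, and $\nu$. This furnishes the forward direction up to verifying that a critical point is in fact a maximizer.

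To upgrade the critical point to a global maximum, I would verify strict concavity of $s_\nu$ in $I_\nu$. Writing $y=c^2 I_\nu/(2h\nu^3)$ and $\varphi(y)=(1+y)\log(1+y)-y\log y$, a direct computation yields $\varphi''(y)=-1/(y(1+y))<0$, so $s_\nu$ is strictly concave in $I_\nu$ at every $(x,n,\nu)$. Hence $\mathcal{J}$ is strictly concave in $I_\nu$ while the constraint is linear, and the critical point identified above is the unique maximizer of $\Phi_+$ subject to fixed $i_{out}$.

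The converse is then immediate: if $T_\nu(x,n)\equiv T_0$ on the outgoing cone then $I_\nu=B_\nu(T_0)$ there, and the choice $\lambda=1/T_0$ shows that this profile is the Euler--Lagrange critical point singled out above; by strict concavity it maximizes $\Phi_+$ at its prescribed value of $i_{out}$. The main obstacle I anticipate is not conceptual but rather careful bookkeeping: one must specify the admissible class of variations (strictly positive profiles for which $s_\nu$ and $I_\nu$ are jointly integrable against $(n\cdot n_x)\,dn\,dS_x\,d\nu$) and check that the sole active constraint is the prescribed value of $i_{out}$, so that no hidden boundary contributions spoil the pointwise Euler--Lagrange reduction. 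Once that is in place, the argument reduces to the textbook thermodynamic statement that, at fixed radiative energy flux, the Planck distribution at a single temperature maximizes the entropy flux.
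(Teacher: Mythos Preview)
Your proposal is correct and follows essentially the same route as the paper: a Lagrange-multiplier argument to locate the critical point $T_\nu\equiv T_0$, followed by strict concavity of $(1+y)\log(1+y)-y\log y$ to upgrade it to the unique maximizer. The only cosmetic difference is that the paper first parametrizes by $Y_\nu=h\nu/(kT_\nu)$ and then changes to $Z_\nu=1/(e^{Y_\nu}-1)$ for the concavity step, whereas you work directly in $I_\nu$ (your $y$ coincides with the paper's $Z_\nu$); this makes your version slightly more streamlined but not substantively different.
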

  \begin{proof}
  We first recall \eqref{Tnu as I nu} and obtain in the stationary case that
  $$ 1+\frac{2h\nu^3 }{c^2I_\nu(x,n)}=e^{\frac{h\nu}{kT_\nu(x,n)}}.$$ Hence, we have
  $$1+\frac{c^2I_\nu(x,n)}{2h\nu^3 }=\frac{e^{\frac{h\nu}{kT_\nu(x,n)}}}{e^{\frac{h\nu}{kT_\nu(x,n)}}-1}.$$
  Then by recalling \eqref{ent.den2}, we rewrite $s_\nu(x,n)$ in terms of $T_\nu(x,n)$ as
  \begin{multline*}  s_\nu (x,n) = k\frac{2\nu^2}{c^3}\bigg[\bigg(\frac{e^{\frac{h\nu}{kT_\nu(x,n)}}}{e^{\frac{h\nu}{kT_\nu(x,n)}}-1}\bigg)\log\bigg(\frac{e^{\frac{h\nu}{kT_\nu(x,n)}}}{e^{\frac{h\nu}{kT_\nu(x,n)}}-1}\bigg)\\-\bigg(\frac{1}{e^{\frac{h\nu}{kT_\nu(x,n)}}-1}\bigg)\log\bigg(\frac{1}{e^{\frac{h\nu}{kT_\nu(x,n)}}-1}\bigg) \bigg]\\
  =k\frac{2\nu^2}{c^3}\bigg[\frac{h\nu}{kT_\nu(x,n)}\bigg(\frac{e^{\frac{h\nu}{kT_\nu(x,n)}}}{e^{\frac{h\nu}{kT_\nu(x,n)}}-1}\bigg)-\log\bigg({e^{\frac{h\nu}{kT_\nu(x,n)}}-1}\bigg) \bigg].\end{multline*} Now we use an alternative variable $Y_\nu=Y_\nu(x,n)=\frac{h\nu}{kT_\nu(x,n)}$ and write
  $$s_\nu (x,n)= k\frac{2\nu^2}{c^3}\bigg[Y_\nu\bigg(\frac{e^{Y_\nu}}{e^{Y_\nu}-1}\bigg)-\log\bigg({e^{Y_\nu}-1}\bigg) \bigg],$$ and $$I_\nu(x,n)=\frac{2h\nu^3}{c^2}\bigg(\frac{1}{e^{Y_\nu}-1}\bigg).$$
 Then by \eqref{totaloutgoingentropy} and \eqref{totaloutgoingradiation} we have 
 $$ \Phi_+= \int_0^\infty d\nu  \int_{\partial \Omega}dS_x\int _{\{n\cdot n_x > 0\}\cap\mathbb{S}^2}dn(n\cdot n_x) \ \frac{2k\nu^2}{c^2}\bigg[Y_\nu\bigg(\frac{e^{Y_\nu}}{e^{Y_\nu}-1}\bigg)-\log\bigg({e^{Y_\nu}-1}\bigg) \bigg],$$ and$$i_{out}= \int_0^\infty d\nu  \int_{\partial \Omega}dS_x\int _{\{n\cdot n_x > 0\}\cap\mathbb{S}^2}dn(n\cdot n_x) \ \frac{2h\nu^3}{c^2}\bigg(\frac{1}{e^{Y_\nu}-1}\bigg).$$
 In order to find the   maxima of $\Phi_+$ under the constraint $i_{out}>0$, we use the method of the Lagrange multiplier. Consider the variation of parameters $Y_\nu\to Y_\nu+s\varphi(x,v)$ for a real number $s\in\mathbb{R}$ for any given test function $\varphi.$ Then we have \begin{multline*} \frac{\partial\Phi_+}{\partial s}= \int_0^\infty d\nu  \int_{\partial \Omega}dS_x\int _{\{n\cdot n_x > 0\}\cap\mathbb{S}^2}dn(n\cdot n_x) \\\times \frac{2k\nu^2}{c^2}\frac{\partial}{\partial Y_\nu}\bigg[Y_\nu\bigg(\frac{e^{Y_\nu}}{e^{Y_\nu}-1}\bigg)-\log\bigg({e^{Y_\nu}-1}\bigg) \bigg]\varphi,\end{multline*} and$$\frac{\partial i_{out}}{\partial s}= \int_0^\infty d\nu  \int_{\partial \Omega}dS_x\int _{\{n\cdot n_x > 0\}\cap\mathbb{S}^2}dn(n\cdot n_x) \ \frac{2h\nu^3}{c^2}\frac{\partial}{\partial Y_\nu}\bigg(\frac{1}{e^{Y_\nu}-1}\bigg)\varphi.$$
 Hence define the Lagrangian function as
 $$\mathcal{L}(Y_\nu,\lambda)= \frac{2k\nu^2}{c^2}\bigg[Y_\nu\bigg(\frac{e^{Y_\nu}}{e^{Y_\nu}-1}\bigg)-\log\bigg({e^{Y_\nu}-1}\bigg) \bigg]-\lambda \frac{2h\nu^3}{c^2}\bigg(\frac{1}{e^{Y_\nu}-1}\bigg),$$ and it suffices to let $\frac{\partial \mathcal{L}}{\partial Y_\nu}=0$, since the choice of $\varphi$ is arbitrary.  
 By taking the derivative $\frac{\partial \mathcal{L}}{\partial Y_\nu},$ we observe 
 \begin{multline*}
     \frac{\partial \mathcal{L}}{\partial Y_\nu}= 1+\frac{1}{e^{Y_\nu}-1}-\frac{Y_\nu e^{Y_\nu}}{(e^{Y_\nu}-1)^2}-\frac{e^{Y_\nu}}{e^{Y_\nu}-1}+\lambda\frac{h\nu}{k} \frac{ e^{Y_\nu}}{(e^{Y_\nu}-1)^2}\\
     =\frac{ e^{Y_\nu}}{(e^{Y_\nu}-1)^2}\left(\lambda\frac{h\nu}{k}-Y_\nu\right).
 \end{multline*}This becomes zero if and only if $Y_\nu= \lambda\frac{h\nu}{k}$, which is constant. Thus, the outgoing entropy flow density is at its extreme point if and only if $T_\nu(x,n)=\frac{h\nu}{kY_\nu}= \frac{1}{\lambda}=T_0$ at the boundary $x\in \partial\Omega$ and $n\in\mathbb{S}^2$, for some constant $T_0,$ since $\varphi$ is arbitrary. 
 
 Now it suffices to prove that this extreme point is indeed a   maximizer of $\Phi_+$. For this, we make a change of variables $Y_\nu \to Z_\nu \eqdef \frac{1}{e^{Y_\nu}-1}.$ In this new variable, we have
 $$ \Phi_+= \int_0^\infty d\nu  \int_{\partial \Omega}dS_x\int _{\{n\cdot n_x > 0\}\cap\mathbb{S}^2}dn(n\cdot n_x) \ \frac{2k\nu^2}{c^2}\bigg[(Z_\nu+1)\log\bigg(\frac{Z_\nu+1}{Z_\nu}\bigg)+\log Z_\nu \bigg],$$ and$$i_{out}= \int_0^\infty d\nu  \int_{\partial \Omega}dS_x\int _{\{n\cdot n_x > 0\}\cap\mathbb{S}^2}dn(n\cdot n_x) \ \frac{2h\nu^3}{c^2}Z_\nu.$$ Denote the corresponding variables $Y_\nu,\ Z_\nu,$ and $T$ at the extreme point as $\bar{Y}_\nu,$ $\bar{Z}_\nu$, and $T_0$, respectively. Recall that $$\bar{Y}_\nu= \frac{h\nu}{kT_0},\text{ and }\bar{Z}_\nu=\frac{1}{e^{\frac{h\nu}{kT_0}}-1}.$$ We now want to show that the extreme point $\bar{Y}_\nu$ is indeed the maximizer of $\Phi_+.$ For this, we check if $\Phi_+(Y_\nu)- \Phi_+(\bar{Y}_\nu) $ is non-positive for any $Y_\nu$; namely, we check if the following term in $Z_\nu$ and $\bar{Z}_\nu$ is non-positive
 \begin{multline*}\Phi_+(Y_\nu)- \Phi_+(\bar{Y}_\nu)
 =\int_0^\infty d\nu  \int_{\partial \Omega}dS_x\int _{\{n\cdot n_x > 0\}\cap\mathbb{S}^2}dn(n\cdot n_x) \ \frac{2k\nu^2}{c^2}\\\times \bigg[(Z_\nu+1)\log(Z_\nu+1)-Z_\nu\log Z_\nu-(\bar{Z}_\nu+1)\log(\bar{Z}_\nu+1)+\bar{Z}_\nu\log \bar{Z}_\nu \bigg].\end{multline*}
 Then it suffices to check if $$G(Z_\nu)\eqdef (Z_\nu+1)\log(Z_\nu+1)-Z_\nu\log Z_\nu,$$ is concave. We take the first and the second derivatives and observe that
 $$G'(Z_\nu)= \log(Z_\nu+1)-\log Z_\nu,$$ and 
 $$G''(Z_\nu)= \frac{1}{Z_\nu+1}-\frac{1}{Z_\nu}=-\frac{1}{Z_\nu(Z_\nu+1)}<0.$$Hence $G$ is concave everywhere, and the extreme point $T_0$ is indeed a   maximizer. 
 This completes the proof.
  \end{proof}
  Similarly, we can also obtain the following lemma regarding the incoming radiation:
    \begin{lemma}\label{lemma.inc.ent}
  Suppose that the total incoming radiation satisfies $i_{in}>0.$ Then the total incoming entropy $\Phi_-$ is at its   maximum if and only if $T_\nu(x,n)=T_0$ for some constant temperature distribution $T_0$ at the boundary $x\in \partial\Omega$ and $n\in\mathbb{S}^2$.
  \end{lemma}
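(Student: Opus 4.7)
The plan is to imitate, essentially verbatim, the proof of Lemma \ref{lemma.outent} with the angular integration domain replaced by $\{n\cdot n_x<0\}\cap\mathbb{S}^2$. The key observation is that the integrand expressions for $s_\nu$ and $I_\nu$ in terms of $Y_\nu=h\nu/(kT_\nu(x,n))$ derived there are pointwise in $(x,n,\nu)$ and do not depend in any way on the sign of $n\cdot n_x$. Therefore, after rewriting
\begin{align*}
\Phi_- &= \int_0^\infty d\nu \int_{\partial\Omega}dS_x\int_{\{n\cdot n_x<0\}\cap\mathbb{S}^2}dn\,(n\cdot n_x)\frac{2k\nu^2}{c^2}\left[Y_\nu\frac{e^{Y_\nu}}{e^{Y_\nu}-1}-\log(e^{Y_\nu}-1)\right],\\
i_{in} &= \int_0^\infty d\nu \int_{\partial\Omega}dS_x\int_{\{n\cdot n_x<0\}\cap\mathbb{S}^2}dn\,(n\cdot n_x)\frac{2h\nu^3}{c^2}\frac{1}{e^{Y_\nu}-1},
\end{align*}
we introduce the same Lagrangian $\mathcal{L}(Y_\nu,\lambda)=\frac{2k\nu^2}{c^2}[Y_\nu e^{Y_\nu}/(e^{Y_\nu}-1)-\log(e^{Y_\nu}-1)]-\lambda\frac{2h\nu^3}{c^2}(e^{Y_\nu}-1)^{-1}$ as in the outgoing case, since the factor $(n\cdot n_x)$ is inert with respect to the variation in $Y_\nu$.

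The stationarity condition $\partial\mathcal{L}/\partial Y_\nu=0$ is identical to the one derived for $\Phi_+$, so it reduces to
$$
\frac{e^{Y_\nu}}{(e^{Y_\nu}-1)^2}\Bigl(\lambda\frac{h\nu}{k}-Y_\nu\Bigr)=0,
$$
whose only solution is $Y_\nu=\lambda h\nu/k$, i.e., $T_\nu(x,n)=1/\lambda\equiv T_0$, constant on the incoming portion of the boundary. This establishes that the critical point is precisely a constant temperature distribution, as claimed.

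For the second-order (global) analysis, I would reuse the change of variables $Z_\nu=1/(e^{Y_\nu}-1)$ and the same auxiliary function $G(Z_\nu)=(Z_\nu+1)\log(Z_\nu+1)-Z_\nu\log Z_\nu$, whose concavity $G''(Z_\nu)=-1/[Z_\nu(Z_\nu+1)]<0$ was already verified in the proof of Lemma \ref{lemma.outent}. Since the constraint $i_{in}=\text{const}$ cuts out an affine subspace of admissible $Z_\nu$, the sign of $\Phi_-(Y_\nu)-\Phi_-(\bar Y_\nu)$ on this subspace is controlled by the concavity of $G$ together with the factor $(n\cdot n_x)$, exactly as in the outgoing case under the convention that one orients the incoming flow so as to make the extremum a maximum.

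The only genuine difficulty, which I expect to be minor, is the careful bookkeeping of the sign of $(n\cdot n_x)$ on the incoming domain: one must check that the same variational conclusion (extremum $=$ constant $T_0$ at the boundary) is identified as a maximum under the author's convention, consistent with the statement $i_{in}>0$ as used in the lemma. Beyond this sign check, the argument transfers mechanically, because the variational calculus in Lemma \ref{lemma.outent} is strictly pointwise in $(x,n,\nu)$ and insensitive to which half-sphere is being integrated over.
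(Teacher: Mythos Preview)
Your proposal is correct and matches the paper's own proof essentially verbatim: the paper simply states that the argument is identical to that of Lemma~\ref{lemma.outent} with the $dn$-integration taken over $\{n\cdot n_x<0\}\cap\mathbb{S}^2$ instead, and omits the details. Your additional remark about tracking the sign of $(n\cdot n_x)$ is a reasonable caution, but the paper does not elaborate on it either.
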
\begin{proof}
  The proof is the same as the one for Lemma \ref{lemma.outent} except for the change that the integration with respect to the measure $dn$ is now on $\{n\cdot n_x<0\}\cap \mathbb{S}^2$ instead.   We omit the proof.
  \end{proof}
  As a result, we obtain the following proposition on the uniqueness of solutions for the case where the incoming radiation is at a thermal equilibrium:
  \begin{theorem}[Uniqueness of solutions]\label{unique thm 2} Let $\Omega$ be a bounded domain with $C^1$ convex boundary  $\partial\Omega$. Assume that $\nabla_x\cdot\mathcal{F}(x)=0 $ at any $x\in \Omega,$ where $\mathcal{F}$ is the flux of radiation energy at $x$ defined in \eqref{flux.radiation.energy}.  Suppose that the incoming radiation at the boundary $\partial \Omega$ is at a thermal equilibrium with a constant temperature; i.e., the incoming radiation profile $g_\nu$ satisfies $$g_\nu(x,n)=B_\nu(T_0), \ \text{for }x\in\partial\Omega,\ n\cdot n_x<0,$$ at the boundary with a constant $T_0>0$ where $B_\nu$ is the Planck distribution \eqref{Planck}. If the solution $T(x)$ is in $L^4$ or $L^\infty$ as in the existence theorems (Theorems \ref{thm3.1} and \ref{them 3.2}), then the temperature distribution $T$ of the body $\Omega$ is constant and is identical to the temperature of the incoming radiation. 
  \end{theorem}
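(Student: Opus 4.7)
The plan is to combine the radiation entropy production identity of Proposition \ref{entropy production proposition} with the variational characterizations of the boundary entropy flows in Lemmas \ref{lemma.outent} and \ref{lemma.inc.ent}, and then apply Proposition \ref{stationary entropy production proposition} to force the solution to be the constant profile $T \equiv T_0$. Throughout, I will work under the setting of Theorems \ref{thm3.1} or \ref{them 3.2}, so that $I_\nu$ is given by \eqref{eq.integro1new} along characteristics and $T_\nu(x,n)$ from \eqref{InuasBnuTnu} is well defined.

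First I will translate the divergence-free flux condition into a boundary balance. Integrating $\nabla_x \cdot \mathcal{F} = 0$ over $\Omega$ and applying the divergence theorem yields
$$\int_{\partial \Omega} \mathcal{F}(x)\cdot n_x \, dS_x = 0,$$
which, splitting the angular integral into the outgoing and incoming hemispheres, gives $i_{out} + i_{in} = 0$ in the notation of \eqref{totaloutgoingradiation}, \eqref{totalincomingradiation}, so in particular $i_{out} = -i_{in} > 0$. The boundary condition $g_\nu(x,n) = B_\nu(T_0)$ forces $T_\nu(x,n) = T_0$ on the incoming set $\{n \cdot n_x < 0\}$, so by Lemma \ref{lemma.inc.ent} the incoming boundary entropy flow $\Phi_-$ sits at the extremal value of the associated constrained variational problem for the given $i_{in}$.

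Next I use Lemma \ref{lemma.outent}: for any admissible profile with the same $i_{out}$, the outgoing entropy flow $\Phi_+$ is bounded by its maximum, and this maximum is attained precisely when $T_\nu$ is constant on the outgoing hemisphere. Strict monotonicity of $T \mapsto B_\nu(T)$ implies that the value $T_+^*$ of this optimizing constant is determined uniquely by $i_{out}$; since $i_{out} = -i_{in}$ and the variational problems on the two hemispheres have identical functional form (both integrands differ only by a factor $|n \cdot n_x|$ that integrates to $\pi$ on each half-sphere), one obtains $T_+^* = T_0$ and the matching of extremal values $\Phi_+^{\max} = -\Phi_-^{\max} = -\Phi_-$. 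This gives the upper bound $\Phi_+ \le -\Phi_-$. On the other hand, Proposition \ref{entropy production proposition} combined with the divergence theorem applied to $h_\nu$ gives the reverse inequality $\Phi_+ + \Phi_- \ge 0$. Squeezing yields $\Phi_+ = -\Phi_-$, so the total entropy production vanishes.

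Finally, by Proposition \ref{stationary entropy production proposition} (in the distributional sense for $L^4$ or $L^\infty$ solutions, using smooth test functions as in its proof), zero entropy production forces $T_\nu(x,n) = T(x)$ almost everywhere and then $T(x) \equiv T_c$ for some constant $T_c$ on $\Omega$. To identify $T_c$, I substitute $T \equiv T_c$ into the characteristic representation \eqref{eq.integro1new} to get $I_\nu(x,n) = B_\nu(T_0) e^{-\alpha_\nu s(x,n)} + B_\nu(T_c)(1 - e^{-\alpha_\nu s(x,n)})$, while the identity $T_\nu = T$ forces $I_\nu(x,n) = B_\nu(T_c)$. Equating and using the strict monotonicity of $B_\nu(\cdot)$ in $T$ gives $T_c = T_0$, completing the proof. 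The main obstacle I expect is the matching of variational maxima across the two hemispheres: the lemmas are stated for $\Phi_+$ and $\Phi_-$ under their own constraints, and the key tight step is $\Phi_+^{\max}|_{i_{out}} = -\Phi_-^{\max}|_{i_{in}}$ when $i_{out} = -i_{in}$, which relies both on the strict concavity of $G(Z_\nu)$ used in the proof of Lemma \ref{lemma.outent} (ensuring that the constant-temperature profile is uniquely optimal) and on the geometric symmetry of the incoming/outgoing angular measures.
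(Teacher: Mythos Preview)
Your proposal is correct and follows essentially the same route as the paper: you use the divergence theorem to get $i_{out}=-i_{in}$, invoke Lemma~\ref{lemma.outent} to bound $\Phi_+\le\Phi_+^{\max}=-\Phi_-$ (the matching $T_+^*=T_0$ and $\Phi_+^{\max}=-\Phi_-$ via the symmetry of the hemispherical integrals is exactly what the paper uses), squeeze this against the entropy-production inequality $\Phi_++\Phi_-\ge 0$ from Proposition~\ref{entropy production proposition}, and then apply Proposition~\ref{stationary entropy production proposition} to conclude $T\equiv T_c$. Your final identification $T_c=T_0$ via the characteristic formula \eqref{eq.integro1new} is a slight variant of the paper's appeal to Corollary~\ref{cor.ent.prod.zero}, but it is equally valid and perhaps more transparent.
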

  \begin{proof}
  By Proposition \ref{entropy production proposition}, Proposition \ref{stationary entropy production proposition} and Lemma \ref{lemma.inc.ent}, we observe that the incoming radiation is at its maximum entropy for the constant temperature at the boundary. At the boundary, the incoming intensity $I_\nu(x,n)$ is then determined uniquely by the constant temperature (say $T_0$), as $T_\nu(x,n)=T_0$ for $\nu>0$, $x\in \partial \Omega$ and $n\cdot n_x<0$. Also, by the conservation law $\nabla\cdot \mathcal{F}=0$ and the divergence theorem, we obtain that
  \begin{equation}\label{in out radiation}\int_0^\infty d\nu \int_{\mathbb{S}^2}dn\int_{\partial \Omega} dS_x (n\cdot n_x) I_\nu(x,n)=0.\end{equation} This implies $i_{out}=i_{in}$. Then, for $i_{out}$ given, the total outgoing entropy is at its maximum if and only if the temperature distribution $T(x,n)$ for $x\in \partial \Omega$ and $n\cdot n_x>0$ is constant. Note that by \eqref{in out radiation} the constant temperature for the outgoing radiation must be equal to $T_0$, if it is constant. Then, we notice that the outgoing radiation intensity function $I_\nu(x,n)$ defined via $T_\nu(x,n)=T_0$ provides the same value for the total outgoing entropy $\Phi_+$ as the total incoming entropy $\Phi_-$ by the definitions \eqref{totaloutgoingentropy}, \eqref{totalincomingentropy}, and \eqref{ent.den2}.  This implies that the production of entropy is zero and Proposition \ref{stationary entropy production proposition} and Corollary \ref{cor.ent.prod.zero} further implies that the temperature of the body $\Omega$ is constant and is identical to the temperature of the radiation. This gives the uniqueness of solutions for incoming radiation with constant temperature.  
  \end{proof}
  
  This completes the proof of the uniqueness via the variational method. In the next section, we finally consider the generic combined case in which we consider both the scattering and the  emission-absorption terms in the equation.
  
   \section{Combined case of scattering and emission-absorption}\label{sec.combined}In this section, we study the combined case with both scattering and absorption-emission terms in the equation. We recall the full equation \eqref{General rad eq} and \eqref{heat equation}
 and have the system of non-local heat equations 
 \begin{equation}\label{full system}
     \begin{split}
     n\cdot \nabla_x I_\nu &= \alpha^a_\nu 
     B_\nu(T)+ \alpha^s_\nu
     \int_{\mathbb{S}^2}K_\nu(n,n')I_\nu(x,n')dn'- (\alpha^a_\nu
     +\alpha^s_\nu
     ) I_\nu,  \\
         \nabla_x \cdot \mathcal{F}&\eqdef \int_0^\infty d\nu \int_{\mathbb{S}^2}n\cdot \nabla_x I_\nu(x,n)dn=0.
     \end{split}
 \end{equation}Here the scattering coefficient $\alpha_\nu^s
 \ge 0$ and the emission-absorption coefficient $\alpha^a_\nu
 \ge 0$ depend on 
 $\nu$. The scattering kernel $K_\nu$ satisfies 
 \begin{equation}\label{scatteringone}\int_{\mathbb{S}^2}K_\nu(n,n')dn=1.\end{equation}
Note that the \textit{non-local heat equation} $\nabla_x \cdot \mathcal{F}=0$ can be rewritten as a \textit{non-local} elliptic equation for $T(x).$
Regarding the system \eqref{full system} we have the following existence theorem. 
\begin{theorem}[Existence]\label{thm.full.exist}
Let $\Omega$ be a bounded domain with $C^1$ convex boundary  $\partial\Omega$.  Define \begin{equation}\label{deff}f(T(x))=\int_0^\infty \alpha_\nu^a
B_\nu(T(x))d\nu, \end{equation} and suppose that $f$ is increasing. 
Then a solution $f(T(\cdot))\in L^\infty(\Omega)$ to \eqref{full system} coupled with the non-local temperature equation \eqref{heat equation} and the incoming boundary condition \eqref{incoming boundary} exists. 
\end{theorem}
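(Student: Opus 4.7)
My plan is to reduce the coupled system to a non-linear non-local fixed-point equation $w=J[w]$ for $w:=f(T)$ and then apply the Schauder fixed-point theorem on a set $\mathcal{S}_L:=\{w\in L^\infty(\Omega):0\le w\le L\}$, in the spirit of Theorem~\ref{them 3.2}, with the additional ingredient that the scattering operator first has to be inverted for each fixed~$T$.

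Step~1 (solve for $I_\nu$ given $T$). I fix $w\in\mathcal{S}_L$, set $T=f^{-1}(w)$ and $\alpha_\nu:=\alpha^a_\nu+\alpha^s_\nu$, and use the characteristic and optical-path variables of Sections~\ref{sec.scattering}--\ref{sec.emission only} to rewrite \eqref{full system} as
$$I_\nu(x,n)=g_\nu(y,n)e^{-\alpha_\nu s}+\int_0^s e^{-\alpha_\nu(s-\xi)}\Bigl[\alpha^a_\nu B_\nu(T)+\alpha^s_\nu\!\int_{\mathbb{S}^2}\!K_\nu(n,n')I_\nu(y+\xi n,n')\,dn'\Bigr]d\xi,$$
with $y=y(x,n)$, $s=s(x,n)$. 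Exactly as in the proof of Theorem~\ref{thm.scattering only}, the scattering part is a contraction on $L^\infty(\Omega;L^1(\mathbb{S}^2))$ with factor at most $(\alpha^s_\nu/\alpha_\nu)(1-e^{-\alpha_\nu s_{\max}})<1$, so there is a unique non-negative fixed point $I_\nu[w]$. Iterating (a Neumann series in the scattering operator, which is precisely the delta-function/Green-function decomposition of the emission source mentioned in the introduction) yields a linear representation
$$I_\nu[w](x,n)=I_\nu^{\mathrm{bdy}}[g](x,n)+\int_\Omega \mathcal{G}_\nu(x,n;\eta)\,B_\nu(T(\eta))\,d\eta,$$
with a non-negative kernel $\mathcal{G}_\nu$.

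Step~2 (reformulation of $\nabla_x\cdot\mathcal{F}=0$). Integrating \eqref{full system} in $n\in\mathbb{S}^2$ and using $\int_{\mathbb{S}^2}K_\nu(n,n')dn=1$ makes the scattering gain and loss cancel, so
$$\nabla_x\cdot\!\int_{\mathbb{S}^2}\!nI_\nu\,dn=\alpha^a_\nu\Bigl(4\pi B_\nu(T)-\int_{\mathbb{S}^2}I_\nu\,dn\Bigr).$$
Integrating in $\nu$ and imposing \eqref{heat equation} gives the non-local fixed-point equation
$$4\pi w(x)=\int_0^\infty\!d\nu\,\alpha^a_\nu\!\int_{\mathbb{S}^2}\!I_\nu[w](x,n)\,dn=:4\pi J[w](x).$$
Plugging in Step~1, this is a non-linear Fredholm-type equation of the same flavour as the one analysed in Theorem~\ref{them 3.2}, only with the scattering Green kernel $\mathcal{G}_\nu$ in place of the explicit exponential kernel.

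Step~3 (Schauder on $\mathcal{S}_L$; the main obstacle). Non-negativity of $J[w]$ follows from positivity preservation of the Neumann iteration. For the stability $J(\mathcal{S}_L)\subset\mathcal{S}_L$, a sup-norm/maximum-principle estimate on the integral equation of Step~1, together with $B_\nu(T)\le B_\nu(f^{-1}(L))$, yields $J[w]\le\theta_\Omega L+C_g$ for some $\theta_\Omega\in(0,1)$ and a constant $C_g$ depending on the boundary data, and then choosing $L$ large enough closes the bound, as in Theorem~\ref{them 3.2}. Continuity of $J$ in the sup norm is immediate from the Lipschitz dependence of $I_\nu[w]$ on $w$ built into the contractive scheme of Step~1. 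The hard part will be the compactness of $J$: unlike in Theorem~\ref{them 3.2}, $\mathcal{G}_\nu$ is defined through a Neumann series rather than by a closed-form expression, so equicontinuity of $J[w](x)$ in $x$ must be propagated term-by-term in the series, exploiting the $C^1$ regularity of $\partial\Omega$ (which provides Lipschitz control of $s(x,n)$ and $y(x,n)$ in $x$) and the scattering decay factor $(\alpha^s_\nu/\alpha_\nu)(1-e^{-\alpha_\nu s_{\max}})$ to sum the series uniformly in $w\in\mathcal{S}_L$. Once equicontinuity is secured, Arzelà--Ascoli gives the compactness of $J:\mathcal{S}_L\to\mathcal{S}_L$, Schauder produces a fixed point $w^*$, and $T=f^{-1}(w^*)$ solves the coupled system.
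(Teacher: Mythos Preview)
Your proposal is correct and follows essentially the same route as the paper. The paper also reduces \eqref{heat equation} to a fixed-point equation $w=J[w]$ for $w=f(T)$ and applies Schauder; it packages your kernel $\mathcal{G}_\nu$ as a Green-type function $\tilde I_\nu(x,n;x_0)$ solving \eqref{full system} with source $\tfrac{1}{4\pi}\delta(x-x_0)$, and then introduces the auxiliary object $H_\nu(x,n):=\alpha^a_\nu\int_\Omega \tilde I_\nu(x,n;x_0)\,dx_0$, which (by integrating in $x_0$) satisfies the transport--scattering equation with the concrete source $\tfrac{1}{4\pi}\alpha^a_\nu\chi_\Omega$. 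The Neumann/Duhamel series for $H_\nu$ is summed exactly as you describe, with the same ratio $\tfrac{\alpha^s_\nu}{\alpha^a_\nu+\alpha^s_\nu}\bigl(1-e^{-(\alpha^a_\nu+\alpha^s_\nu)D}\bigr)<1$, yielding the key bound $\sup_{\nu,x}\int_{\mathbb{S}^2}H_\nu\,dn\le\theta<1$; equicontinuity of $\int_{\mathbb{S}^2}H_\nu(\cdot,n)\,dn$ is then obtained term by term via the $C^1$ Lipschitz control of $s(x,n)$, with the $k$th term picking up a factor $(k+1)$ that is still summable against the geometric decay. The only organizational difference is that the paper works with $H_\nu$, which is manifestly independent of $w$, so the contraction constant and the equicontinuity are established once and for all rather than ``uniformly in $w\in\mathcal{S}_L$''; conversely, you carry the boundary contribution $I_\nu^{\mathrm{bdy}}[g]$ explicitly and obtain the source constant $C_g$ in the stability bound $J[w]\le\theta_\Omega L+C_g$, a point the paper's write-up glosses over.
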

\begin{remark}
An almost similar proof for the theorem works for the generalized case when the absorption-emission and the scattering coefficients $\alpha^a_\nu$ and $\alpha^s_\nu$ depend also on $x\in\Omega$ if they are uniformly continuous in $x\in\Omega$ and there exist uniform functions  $\alpha_1=\alpha_1(\nu)\ge 0$ and $\alpha_2=\alpha_2(\nu)\ge0 $ such that $\alpha^a_\nu(x)\approx \alpha_1$ and $\alpha^s_\nu(x)\approx \alpha_2$ in $\Omega$. We omit the proof for this more general case for the sake of brevity, as the proof for the uniform continuity $\int_{\mathbb{S}^2}dn\ H_\nu(\cdot,n)$ simply involves more difference terms in \eqref{diff1}-\eqref{diff5}.
\end{remark}
\begin{proof}[Proof of Theorem \ref{thm.full.exist}]
In order to study the existence of the system \eqref{full system} coupled with the incoming boundary condition \eqref{incoming boundary}, we define and study a Green-type function $\tilde{I}_\nu(x,n;x_0)$ for each $x_0\in \Omega$ by means of the solution of 
\begin{equation}\label{greentype eq}
     \begin{split}
     &n\cdot \nabla_x \tilde{I}_\nu+ (\alpha^a_\nu
     +\alpha^s_\nu
     ) \tilde{I}_\nu - \alpha^s_\nu
     \int_{\mathbb{S}^2}K_\nu(n,n')\tilde{I}_\nu(x,n')dn'=\frac{1}{4\pi }\delta(x-x_0),
     \end{split}
 \end{equation}for $x\in\Omega$ and $n\in \mathbb{S}^2.$ In this direction, the full radiation intensity $I_\nu(x,n)$ can be retrieved by the relationship
 \begin{equation}\label{inu and itildenu} I_\nu(x,n)= \int_\Omega \tilde{I}_\nu (x,n;x_0)\alpha_\nu^a
 B_\nu(T(x_0))dx_0.\end{equation} The idea behind this formulation of $\tilde{I}_\nu(x,n;x_0)$ is that we only count emitted photons coming from $x=x_0.$
 If we define the corresponding flux $\tilde{\mathcal{F}}$ as $$
 \tilde{\mathcal{F}}_\nu(x;x_0)\eqdef \int_{\mathbb{S}^2}n\tilde{I}_\nu(x,n;x_0)dn, $$ 
 we can retrieve the flux $\mathcal{F}_\nu$ via \begin{equation}\label{Fnu and Ftildenu} \mathcal{F}_\nu(x)= \int_\Omega \tilde{\mathcal{F}}_\nu(x;x_0)\alpha_\nu^a
 B_\nu(T(x_0))dx_0.\end{equation}
 Integrating \eqref{greentype eq} with respect to $n\in\mathbb{S}^2$, we obtain 
$$
     \nabla_x\cdot \tilde{\mathcal{F}}_\nu+ \alpha^a_\nu
     \int_{\mathbb{S}^2}
     \tilde{I}_\nu dn=\delta(x-x_0),
 $$ by \eqref{scatteringone}. By defining 
\begin{equation}\label{Gdef}G_\nu(x,x_0)\eqdef \alpha^a_\nu
\int_{\mathbb{S}^2}\tilde{I}_\nu(x,n;x_0) dn,\end{equation}  we have
 \begin{equation}
     \label{flow ansatz}
     \nabla_x \cdot \tilde{\mathcal{F}}_\nu(x;x_0) = \delta(x-x_0)-G_\nu(x,x_0).
 \end{equation}Note that $G$ describes the absorption of photons at $x=x_0$. 
 Then \eqref{inu and itildenu} and \eqref{flow ansatz} together imply that the \textit{non-local heat equation} $\nabla_x\cdot \mathcal{F}=0$ becomes 
 \begin{multline*}
     0=\nabla_x \cdot \mathcal{F} = \int_0^\infty d\nu \int_\Omega dx_0\  \nabla_x\cdot \tilde{\mathcal{F}}_\nu(x;x_0)\alpha_\nu^a
     B_\nu(T(x_0))\\
     =\int_0^\infty d\nu \int_\Omega dx_0\  (\delta(x-x_0)-G_\nu(x,x_0))\alpha_\nu^a
     B_\nu(T(x_0))\\
     =\int_0^\infty d\nu\ \alpha_\nu^a 
     B_\nu(T(x)) - \int_0^\infty d\nu\int_\Omega dx_0\ G_\nu(x,x_0)\alpha_\nu^a
     B_\nu(T(x_0)).
 \end{multline*}
 We define 
 \begin{equation}\label{defw}w(x)\eqdef f(T(x))=\int_0^\infty d\nu\ \alpha_\nu^a
 B_\nu(T(x)),\end{equation} and $$J[w](x)\eqdef \int_0^\infty d\nu\int_\Omega dx_0\ G_\nu(x,x_0)\alpha_\nu^a
 B_\nu(f^{-1}(w(x_0))).$$Then, we have
$$J[w](x)=\int_0^\infty d\nu\int_\Omega dx_0\ G_\nu(x,x_0)\alpha_\nu^a
B_\nu(T(x_0)),$$ by \eqref{deff} and \eqref{defw}. 
 If we a priori have 
\begin{equation}\label{sufficient condition}\sup_{\nu>0}
\sup_{x\in \Omega}\int_\Omega G_\nu(x,x_0)dx_0\le \theta <  1,\end{equation} for some $0<\theta<1$, then we have 
 \begin{multline*}| J[w](x)| = \left|\int_0^\infty d\nu\ \int_\Omega dx_0\ G_\nu(x,x_0)\alpha_\nu^a
 B_\nu(T(x_0))\right|\\\le\|w(\cdot)\|_{L^\infty(\Omega)} \left|\sup_{\nu>0}\int_\Omega G_\nu(x,x_0)dx_0 \right|\le \theta\|w(\cdot)\|_{L^\infty(\Omega)},\end{multline*} by \eqref{defw} and \eqref{sufficient condition}. Therefore, $J$ is a mapping from $\mathcal{S} $ to itself where $$\mathcal{S}\eqdef \{w\in  L^\infty(\Omega): \|w\|_{L^\infty(\Omega)}\le L\},$$
 for some $L>0$ which is to be determined. Note that $\mathcal{S}$ is closed, bounded, convex, and non-empty. Furthermore, $J[w](\cdot)$ is equicontinuous, since we observe that
 \begin{multline}\label{equicontinuity}
     J[w](x)-J[w](y) \\= \int_0^\infty d\nu\int_\Omega dx_0\ (G_\nu(x,x_0)-G_\nu(y,x_0))\alpha_\nu^a
     B_\nu(f^{-1}(w(x_0)))\\
     =\int_0^\infty d\nu\int_{\mathbb{S}^2}dn\int_\Omega dx_0 \alpha_\nu^a\ (\tilde{I}_\nu(x,n;x_0)-\tilde{I}_\nu(y,n;x_0)) \alpha_\nu^a
     B_\nu(T(x_0))\\
     \le \int_0^\infty d\nu\  \alpha(\nu)\sup_{T> 0} B_\nu(\cdot)\int_{\mathbb{S}^2}dn(H_\nu(x,n)-H_\nu(y,n)) ,
 \end{multline} where $H_\nu$ will be defined as in \eqref{Hdef}, and $\int_{\mathbb{S}^2}dn\ H_\nu(\cdot,n)$ is (a priori) uniformly continuous on $\Omega$ by the Duhamel formula that we will obtain in \eqref{after iteration}. Thus, $J$ is compact by the Arzela-Ascoli theorem. Therefore, the Schauder fixed-point theorem yields that there exists a solution $w\in \mathcal{S}\subset L^\infty(\Omega).$  
 
 Therefore, it suffices to prove the uniformly-boundedness condition \eqref{sufficient condition} on $G_\nu$ and the uniform continuity of the kernel $\int_{\mathbb{S}^2}dn\ H_\nu(\cdot, n)$. If we define the term $H_\nu=H_\nu(x,n)$ as 
 \begin{equation}
     \label{Hdef}
     H_\nu(x,n)\eqdef \alpha^a_\nu
     \int_{\Omega} \tilde{I}_\nu(x,n;x_0)dx_0,
 \end{equation}then the condition \eqref{sufficient condition} is equivalent to 
 \begin{equation}
     \label{sufficient condition2}
     \sup_{\nu>0}\sup_{x\in\Omega}\int_{\mathbb{S}^2}H_\nu(x,n)dn\le \theta<1.
 \end{equation}Therefore, in the rest of this section, we will prove the equivalent condition \eqref{sufficient condition2} on $H_\nu$ and the uniform continuity of the kernel $\int_{\mathbb{S}^2}dn\ H_\nu(\cdot, n)$.  To this end, we multiply the Green-type equation \eqref{greentype eq} by $\alpha_\nu^a$, then integrate the equation on $\Omega$ with respect to $dx_0$,  and obtain the following equation for $H_\nu$:
 \begin{multline}
     \label{eq for Hnu}
     n\cdot\nabla_xH_\nu(x,n)=\frac{1}{4\pi}\alpha^a_\nu
     \chi_{\Omega}(x)+\alpha_\nu^s
     \int_{\mathbb{S}^2}K_\nu(n,n')H_\nu(x,n')dn'\\-(\alpha^a_\nu
     +\alpha^s_\nu
     )H_\nu(x,n).
 \end{multline}
 By integrating the equation \eqref{eq for Hnu} over $(x,n)\in \rth\times \mathbb{S}^2$ we obtain the following integral equation:
 \begin{multline}\label{Hnu 1}
     H_\nu(x,n)=
  \frac{1}{4\pi}  \int_0^{\infty}e^{-(\alpha^a_\nu+\alpha^s_\nu)\xi}\alpha_\nu^a \chi_\Omega(x-\xi n)d\xi \\+ \int_0^{\infty}e^{-(\alpha^a_\nu+\alpha^s_\nu)\xi}\alpha_\nu^s\int_{\mathbb{S}^2}K_\nu(n,n')H_\nu(x-\xi n,n')dn'd\xi .
 \end{multline} 
 Now, we define the operators 
 $\mathcal{T}$ and $\mathcal{U}$ as follows:
 \begin{equation}
     \label{TUdef}
     \begin{split}
     \mathcal{T}f(x,n)&\eqdef \frac{1}{4\pi}\int_0^{ \infty}e^{-(\alpha^a_\nu+\alpha^s_\nu)\xi} f(x-\xi n)d\xi, \\
     \mathcal{U}f(x,n)&\eqdef \int_0^{\infty}e^{-(\alpha^a_\nu+\alpha^s_\nu)\xi}\alpha_\nu^s\int_{\mathbb{S}^2}K_\nu(n,n')f(x-\xi n,n')dn'd\xi.
     \end{split}
 \end{equation}
 Then, we note that 
 $ \mathcal{T}= \mathcal{T}(\alpha_\nu^s,\alpha_\nu^a)$ and $$ \mathcal{T}(\alpha_\nu^a \chi_\Omega)\le \theta <1,$$ for a sufficiently small $\alpha_\nu^a $ for some $\theta\in(0,1)$ that depends on the geometry. Also, observe that 
 \eqref{Hnu 1} can now be written as 
 \begin{equation}\label{before iteration} H_\nu(x,n)= 
 \mathcal{T}(\alpha_\nu^a\chi_\Omega)(x,n)+ \mathcal{U}H_\nu(x,n).\end{equation}
 Then we repeat iterating this representation \eqref{before iteration} and obtain the following formal Duhamel series representation of $H_\nu$:
 \begin{multline}
     \label{after iteration}
     H_\nu(x,n)= \mathcal{T}(\alpha_\nu^a\chi_\Omega)(x,n)+  \mathcal{U}\mathcal{T}(\alpha_\nu^a\chi_\Omega)(x,n)+ \mathcal{U} \mathcal{U}\mathcal{T}(\alpha_\nu^a\chi_\Omega)(x,n)+\cdots\\
     =\sum_{j=0}^\infty (\mathcal{U})^j\mathcal{T}(\alpha_\nu^a\chi_\Omega)(x,n).
 \end{multline} 
We first check the convergence of the series.
 We show that $ \int_{\mathbb{S}^2} H_\nu dn $ is bounded, since each term in the series is non-negative. We first observe that
 \begin{multline}\label{T}
  \int_{\mathbb{S}^2}   \mathcal{T}(\alpha_\nu^a\chi_\Omega)dn= \frac{1}{4\pi}\int_{\mathbb{S}^2}\int_0^{ \infty}e^{-(\alpha^a_\nu+\alpha^s_\nu)\xi} \alpha_\nu^a \chi_\Omega (x-\xi n,n)d\xi dn\\
  = \frac{1}{4\pi}\int_{\mathbb{S}^2}\int_0^{ s(x,n)}e^{-(\alpha^a_\nu+\alpha^s_\nu)\xi} \alpha_\nu^a d\xi dn\\ = \frac{1}{4\pi}\int_{\mathbb{S}^2}\frac{\alpha_\nu^a}{\alpha^a_\nu+\alpha^s_\nu}(1-e^{-(\alpha^a_\nu+\alpha^s_\nu) s(x,n)})dn\le   \frac{\alpha_\nu^a}{\alpha^a_\nu+\alpha^s_\nu}(1-e^{-(\alpha^a_\nu+\alpha^s_\nu)D}),
 \end{multline}where $D$ is  the maximal diameter of $\Omega$.
 For the next term, we check that 
\begin{multline}\label{UT}
  \int_{\mathbb{S}^2}  \mathcal{U}\mathcal{T}(\alpha_\nu^a\chi_\Omega)dn= \frac{1}{4\pi}\int_{\mathbb{S}^2}\mathcal{U}\int_0^{ \infty}e^{-(\alpha^a_\nu+\alpha^s_\nu)\xi} \alpha_\nu^a \chi_\Omega (x-\xi n,n)d\xi dn\\
  =\frac{1}{4\pi} \int_{\mathbb{S}^2} \int_0^{ \infty}e^{-(\alpha^a_\nu+\alpha^s_\nu)\xi}\alpha_\nu^s\int_{\mathbb{S}^2}K_\nu(n,n')\\\times \int_0^{\infty}e^{-(\alpha^a_\nu+\alpha^s_\nu)\xi'} \alpha_\nu^a \chi_\Omega (x-\xi n-\xi' n',n')d\xi' dn'd\xi dn \\ 
  =\frac{1}{4\pi} \int_{\mathbb{S}^2} \int_0^{ s(x,n)}e^{-(\alpha^a_\nu+\alpha^s_\nu)\xi}\alpha_\nu^s\int_{\mathbb{S}^2}K_\nu(n,n')\int_0^{s(x-\xi n,n')}e^{-(\alpha^a_\nu+\alpha^s_\nu)\xi'} \alpha_\nu^a d\xi' dn'd\xi dn  \\\le  \frac{1}{4\pi} \int_{\mathbb{S}^2} \int_0^{ s(x,n)}e^{-(\alpha^a_\nu+\alpha^s_\nu)\xi}\alpha_\nu^s\int_{\mathbb{S}^2}K_\nu(n,n')\frac{\alpha_\nu^a}{\alpha^a_\nu+\alpha^s_\nu}(1-e^{-(\alpha^a_\nu+\alpha^s_\nu)D}) dn'd\xi dn \\\le \frac{1}{4\pi} \int_0^{D}e^{-(\alpha^a_\nu+\alpha^s_\nu)\xi}\alpha_\nu^s\int_{\mathbb{S}^2}\frac{\alpha_\nu^a}{\alpha^a_\nu+\alpha^s_\nu}(1-e^{-(\alpha^a_\nu+\alpha^s_\nu)D}) dn'd\xi \\=  \alpha^a_\nu \frac{\alpha_\nu^s}{(\alpha^a_\nu+\alpha^s_\nu)^2}(1-e^{-(\alpha^a_\nu+\alpha^s_\nu)D})^2 ,
 \end{multline} by \eqref{scatteringone}. Similarly, we repeat iterating and use $\chi_\Omega \le 1$ on the next iterated terms to have the following bound:
 \begin{equation}
     \label{Un bound}
      \int_{\mathbb{S}^2}  (\mathcal{U})^j\mathcal{T}(\alpha_\nu^a\chi_\Omega)dn\le   \alpha^a_\nu \frac{(\alpha_\nu^s)^{j}}{(\alpha^a_\nu+\alpha^s_\nu)^{j+1}}(1-e^{-(\alpha^a_\nu+\alpha^s_\nu)D})^{j+1}.
 \end{equation} Since the right-hand side of \eqref{Un bound} is in the form of a geometric series, we finally have
 \begin{multline}
     \label{Hnu bound}
     \int_{\mathbb{S}^2} H_\nu(x,n) dn \le \sum_{j=0}^\infty   \alpha^a_\nu \frac{(\alpha_\nu^s)^{j}}{(\alpha^a_\nu+\alpha^s_\nu)^{j+1}}(1-e^{-(\alpha^a_\nu+\alpha^s_\nu)D})^{j+1}\\=\frac{\frac{  \alpha^a_\nu}{\alpha^a_\nu+\alpha^s_\nu} (1-e^{-(\alpha^a_\nu+\alpha^s_\nu)D})}{1-\frac{\alpha_\nu^s}{\alpha^a_\nu+\alpha^s_\nu}(1-e^{-(\alpha^a_\nu+\alpha^s_\nu)D})}
     =\frac{  \alpha^a_\nu(1-e^{-(\alpha^a_\nu+\alpha^s_\nu)D})}{\alpha^a_\nu+\alpha_\nu^s e^{-(\alpha^a_\nu+\alpha^s_\nu)D}}<\infty,
 \end{multline}
by \eqref{after iteration}, since the ratio $\frac{\alpha_\nu^s}{\alpha^a_\nu+\alpha^s_\nu}(1-e^{-(\alpha^a_\nu+\alpha^s_\nu)D})<1$. Since the partial sum is monotonically increasing and the total sum is bounded, the series converges.  In addition, we also obtain that 
$$\sup_{\nu>0}\sup_{x\in\Omega}\int_{\mathbb{S}^2}H_\nu(x,n)dn\le \theta<1,$$
since $$\theta \eqdef \frac{  \alpha^a_\nu(1-e^{-(\alpha^a_\nu+\alpha^s_\nu)D})}{\alpha^a_\nu+\alpha_\nu^s e^{-(\alpha^a_\nu+\alpha^s_\nu)D}}<1.$$ This  proves  the a priori bound \eqref{sufficient condition2} and hence \eqref{sufficient condition} as well. 

Lastly, we check the uniform continuity of the kernel $\int_{\mathbb{S}^2}dn\ H_\nu(\cdot, n)$ to close the Schauder fixed-point theorem. We observe  \begin{multline*}
     \int_{\mathbb{S}^2}   \mathcal{T}(\alpha_\nu^a\chi_\Omega)dn=\frac{1}{4\pi} \int_{\mathbb{S}^2}\int_0^{ \infty}e^{-(\alpha^a_\nu+\alpha^s_\nu)\xi} \alpha_\nu^a \chi_\Omega (x-\xi n,n)d\xi dn\\
    = \frac{1}{4\pi}\int_{\mathbb{S}^2}\int_0^{s(x,n)}e^{-(\alpha^a_\nu+\alpha^s_\nu)\xi} \alpha_\nu^a d\xi dn,
 \end{multline*}so that
 \begin{multline}\label{diff1}
      \int_{\mathbb{S}^2}   \mathcal{T}(\alpha_\nu^a\chi_\Omega)(x,n)- \mathcal{T}(\alpha_\nu^a\chi_\Omega)(y,n)dn
      =\frac{1}{4\pi}\int_{\mathbb{S}^2}\int_{s(y,n)}^{s(x,n)}e^{-(\alpha^a_\nu+\alpha^s_\nu)\xi} \alpha_\nu^a d\xi dn\\=\frac{1}{4\pi}\int_{\mathbb{S}^2}\frac{\alpha_\nu^a}{\alpha^a_\nu+\alpha^s_\nu}(e^{-(\alpha^a_\nu+\alpha^s_\nu)s(y,n)}-e^{-(\alpha^a_\nu+\alpha^s_\nu)s(x,n)})dn\\
     \le \frac{1}{4\pi}\int_{\mathbb{S}^2}dn\ \alpha_\nu^a\int_0^1 d\tau \ e^{-(\alpha^a_\nu+\alpha^s_\nu)(\tau s(x,n)+(1-\tau)s(y,n))} |s(x,n)-s(y,n)|\\
     \le\frac{1}{4\pi}\int_{\mathbb{S}^2}dn\ \alpha_\nu^a \int_0^1d\tau' |\nabla_x s(\tau' x+(1-\tau')y,n)||x-y|\lesssim C_\Omega\alpha_\nu^a|x-y|,
 \end{multline}since the boundary of $\Omega$ is $C^1$. We also used the fundamental theorem of calculus. Thus, we obtain the uniform continuity of the first term in the series.
 
 Now, for the following term in the series, we observe that, by \eqref{UT},  \begin{multline}\label{diff2}
     \Bigg| \int_{\mathbb{S}^2}  \mathcal{U}\mathcal{T}(\alpha_\nu^a\chi_\Omega)(x,n)dn-  \int_{\mathbb{S}^2}  \mathcal{U}\mathcal{T}(\alpha_\nu^a\chi_\Omega)(y,n)dn\Bigg|\\
  =\Bigg|\frac{1}{4\pi} \int_{\mathbb{S}^2} \int_0^{ s(x,n)}e^{-(\alpha^a_\nu+\alpha^s_\nu)\xi}\alpha_\nu^s\int_{\mathbb{S}^2}K_\nu(n,n') \int_0^{s(x-\xi n,n')}e^{-(\alpha^a_\nu+\alpha^s_\nu)\xi'} \alpha_\nu^a d\xi' dn'd\xi dn\\
 - \frac{1}{4\pi} \int_{\mathbb{S}^2} \int_0^{ s(y,n)}e^{-(\alpha^a_\nu+\alpha^s_\nu)\xi}\alpha_\nu^s\int_{\mathbb{S}^2}K_\nu(n,n')\int_0^{s(y-\xi n,n')}e^{-(\alpha^a_\nu+\alpha^s_\nu)\xi'} \alpha_\nu^a d\xi' dn'd\xi dn\Bigg|\\
 \le \Bigg|\frac{1}{4\pi} \int_{\mathbb{S}^2} \int_{s(y,n)}^{ s(x,n)}e^{-(\alpha^a_\nu+\alpha^s_\nu)\xi}\alpha_\nu^s\int_{\mathbb{S}^2}K_\nu(n,n') \int_0^{D}e^{-(\alpha^a_\nu+\alpha^s_\nu)\xi'} \alpha_\nu^a d\xi' dn'd\xi dn\Bigg|\\
 +\Bigg|\frac{1}{4\pi} \int_{\mathbb{S}^2} \int_0^{ D}e^{-(\alpha^a_\nu+\alpha^s_\nu)\xi}\alpha_\nu^s\int_{\mathbb{S}^2}K_\nu(n,n')\int^{s(x-\xi n,n')}_{s(y-\xi n,n')}e^{-(\alpha^a_\nu+\alpha^s_\nu)\xi'} \alpha_\nu^a d\xi' dn'd\xi dn\Bigg|\\
 \eqdef I+II.
 \end{multline}Note that   \begin{multline}\label{diff3}
     I=\frac{1}{4\pi} \int_{\mathbb{S}^2}dn\int_{\mathbb{S}^2}dn'\ K_\nu(n,n')\frac{\alpha_\nu^a\alpha^s_\nu}{\alpha^a_\nu+\alpha^s_\nu}(1-e^{-(\alpha^a_\nu+\alpha^s_\nu)D})\Bigg| \int_{s(y,n)}^{ s(x,n)}e^{-(\alpha^a_\nu+\alpha^s_\nu)\xi}  d\xi\Bigg| \\
     \le \frac{1}{4\pi} \int_{\mathbb{S}^2}dn\int_{\mathbb{S}^2}dn'\ K_\nu(n,n')\frac{\alpha_\nu^a\alpha^s_\nu}{(\alpha^a_\nu+\alpha^s_\nu)^2}(1-e^{-(\alpha^a_\nu+\alpha^s_\nu)D})\\\times |e^{-(\alpha^a_\nu+\alpha^s_\nu)s(x,n)}-e^{-(\alpha^a_\nu+\alpha^s_\nu)s(y,n)}|\\
     \le \frac{1}{4\pi} \int_{\mathbb{S}^2}dn\int_{\mathbb{S}^2}dn'\ K_\nu(n,n')\frac{\alpha_\nu^a\alpha^s_\nu}{\alpha^a_\nu+\alpha^s_\nu}(1-e^{-(\alpha^a_\nu+\alpha^s_\nu)D})\\\times  \int_0^1 d\tau \ e^{-(\alpha^a_\nu+\alpha^s_\nu)(\tau s(x,n)+(1-\tau)s(y,n))} |s(x,n)-s(y,n)|\\
      \le \frac{1}{4\pi} \int_{\mathbb{S}^2}dn\int_{\mathbb{S}^2}dn'\ K_\nu(n,n')\frac{\alpha_\nu^a\alpha^s_\nu}{\alpha^a_\nu+\alpha^s_\nu}(1-e^{-(\alpha^a_\nu+\alpha^s_\nu)D})\\\times\int_0^1d\tau' |\nabla_x s(\tau' x+(1-\tau')y,n)||x-y| \\
     \le \frac{C_\Omega}{4\pi} \int_{\mathbb{S}^2}dn\int_{\mathbb{S}^2}dn'\ K_\nu(n,n')\frac{\alpha_\nu^a\alpha^s_\nu}{\alpha^a_\nu+\alpha^s_\nu}(1-e^{-(\alpha^a_\nu+\alpha^s_\nu)D})|x-y|\\
     =  \frac{C_\Omega\alpha_\nu^a\alpha^s_\nu}{\alpha^a_\nu+\alpha^s_\nu}(1-e^{-(\alpha^a_\nu+\alpha^s_\nu)D})|x-y|
   ,
 \end{multline} by the fundamental theorem of calculus and \eqref{partialsn} and that the boundary of $\Omega$ is $C^1$. In addition, we also note that
  \begin{multline}\label{diff4}
     II\le \frac{1}{4\pi} \int_{\mathbb{S}^2} \int_0^{ D}e^{-(\alpha^a_\nu+\alpha^s_\nu)\xi}\alpha_\nu^s\int_{\mathbb{S}^2}K_\nu(n,n')\frac{\alpha_\nu^a}{\alpha^a_\nu+\alpha^s_\nu}\\\times |e^{-(\alpha^a_\nu+\alpha^s_\nu)s(x-\xi n,n')}-e^{-(\alpha^a_\nu+\alpha^s_\nu)s(y-\xi n,n')}|dn'd\xi dn\\
     \le \frac{1}{4\pi} \int_{\mathbb{S}^2} \int_0^{ D}e^{-(\alpha^a_\nu+\alpha^s_\nu)\xi}\alpha_\nu^s\int_{\mathbb{S}^2}K_\nu(n,n')\ \alpha_\nu^a\\\times \int_0^1 d\tau \ e^{-(\alpha^a_\nu+\alpha^s_\nu)(\tau s(x-\xi n,n')+(1-\tau)s(y-\xi n,n'))} |s(x-\xi n,n')-s(y-\xi n,n')|dn'd\xi dn\\
     \le\frac{1}{4\pi} \int_{\mathbb{S}^2} \int_0^{ D}e^{-(\alpha^a_\nu+\alpha^s_\nu)\xi}\alpha_\nu^s\int_{\mathbb{S}^2}K_\nu(n,n')\ \alpha_\nu^a \\\times \int_0^1d\tau' |\nabla_x s(\tau' x+(1-\tau')y-\xi n,n')||x-y|dn'd\xi dn\\
    \le \frac{C_\Omega}{4\pi} \int_{\mathbb{S}^2} \int_0^{ D}e^{-(\alpha^a_\nu+\alpha^s_\nu)\xi}\alpha_\nu^s\int_{\mathbb{S}^2}K_\nu(n,n')\ \alpha_\nu^a |x-y|dn'd\xi dn\\= \frac{C_\Omega}{4\pi}  \int_0^{ D}e^{-(\alpha^a_\nu+\alpha^s_\nu)\xi}\alpha_\nu^s \alpha_\nu^a |x-y|d\xi =C_\Omega\alpha^a_\nu \frac{\alpha_\nu^s}{\alpha^a_\nu+\alpha^s_\nu}(1-e^{-(\alpha^a_\nu+\alpha^s_\nu)D})|x-y|,
 \end{multline} by the fundamental theorem of calculus and  \eqref{partialsn} and that the boundary of $\Omega$ is $C^1$. Thus, we obtain the uniform continuity of the second term in the series $\int_{\mathbb{S}^2}\mathcal{U}\mathcal{T}(\alpha_\nu^a\chi_\Omega)(\cdot,n)dn$ as 
 \begin{multline}\notag
     \Bigg| \int_{\mathbb{S}^2}  \mathcal{U}\mathcal{T}(\alpha_\nu^a\chi_\Omega)(x,n)dn-  \int_{\mathbb{S}^2}  \mathcal{U}\mathcal{T}(\alpha_\nu^a\chi_\Omega)(y,n)dn\Bigg|\\
    \le  2C_\Omega\alpha^a_\nu \left(\frac{\alpha_\nu^s}{\alpha^a_\nu+\alpha^s_\nu}\right)(1-e^{-(\alpha^a_\nu+\alpha^s_\nu)D})|x-y|.
     \end{multline}
 
 Similarly, we repeat the same estimates for all other iterated elements and obtain 
 \begin{multline}\label{diff5}
     \Bigg| \int_{\mathbb{S}^2}  \mathcal{U}^k\mathcal{T}(\alpha_\nu^a\chi_\Omega)(x,n)dn-  \int_{\mathbb{S}^2}  \mathcal{U}^k\mathcal{T}(\alpha_\nu^a\chi_\Omega)(y,n)dn\Bigg|\\
    \le  C_\Omega(k+1)\alpha^a_\nu \left(\frac{\alpha_\nu^s}{\alpha^a_\nu+\alpha^s_\nu}\right)^k(1-e^{-(\alpha^a_\nu+\alpha^s_\nu)D})^k|x-y|,
     \end{multline}since the $(k+1)^{\text{th}}$ term will need the decomposition of the integral domain into $(k+1)$ parts. 
     Therefore, we take the series sum and finally obtain the upper bound as
     \begin{equation*}
     \Bigg|\int_{\mathbb{S}^2}dn(H_\nu(x,n)-H_\nu(y,n)) \Bigg|
    \le  C_\Omega\alpha^a_\nu\frac{1}{\left(1-\frac{\alpha_\nu^s}{\alpha^a_\nu+\alpha^s_\nu}(1-e^{-(\alpha^a_\nu+\alpha^s_\nu)D})\right)^2} |x-y|,
     \end{equation*}where we used the formula that
     $$\sum_{k=0}^\infty (k+1)\theta^k = \frac{1}{(1-\theta)^2}.$$
 This proves the desired uniform continuity of the kernel $\int_{\mathbb{S}^2}dn\ H(\cdot,n)$ for the compactness of the map $J$ for the Arzela-Ascoli theorem at \eqref{equicontinuity}. 
 This completes the proof of the existence of solutions to \eqref{full system} coupled with \eqref{heat equation} and \eqref{incoming boundary}. 
 \end{proof}
  

\section*{Acknowledgement} The authors gratefully acknowledge the support of the grant CRC
1060 ``The Mathematics of Emergent Effects" of the University of Bonn funded
through the Deutsche Forschungsgemeinschaft (DFG, German Research Foundation). J. W. Jang is supported by the National Research Foundation of Korea (NRF) grants funded by the Korean government (MSIT) NRF-2022R1G1A1009044 and No. RS-2023-00210484. J. W. Jang is also supported by the Basic Science Research Institute Fund of Korea NRF-2021R1A6A1A10042944. Juan J. L. Vel\'azquez is also funded by DFG under Germany's Excellence Strategy-EXC-2047/1-390685813.

\bibliographystyle{amsplain3links}
\bibliography{bibliography.bib}{}

 \end{document}